\newtheorem{theorem}{Theorem}
\newtheorem{lemma}[theorem]{Lemma}
\newtheorem{corollary}[theorem]{Corollary}
\newtheorem{proposition}[theorem]{Proposition}
\newtheorem{remark}[theorem]{Remark}
\title{The diachromatic number of digraphs
\thanks{Research supported by CONACyT-M{\' e}xico under Project 282280 and PAPIIT-M{\' e}xico under Project IN104915 and IN107218.}}
\author{Gabriela Araujo-Pardo\footnotemark[2] \and Juan Jos{\' e} Montellano-Ballesteros\footnotemark[2] \and Mika Olsen \footnotemark[3] \and Christian Rubio-Montiel\footnotemark[4]}
\begin{document}
\maketitle

\def\thefootnote{\fnsymbol{footnote}}
\footnotetext[2]{Instituto de Matem{\' a}ticas, Universidad Nacional Aut{\'o}noma de M{\' e}xico, Mexico City, Mexico. {\tt [garaujo|juancho]@math.unam.mx}.}
\footnotetext[3]{Departamento de Matem{\' a}ticas Aplicadas y Sistemas, UAM-Cuajimalpa, Mexico City, Mexico. {\tt
olsen.mika@gmail.com}.}
\footnotetext[4]{Divisi{\' o}n de Matem{\' a}ticas e Ingenier{\' i}a, FES Acatl{\' a}n, Universidad Nacional Aut{\'o}noma de M{\' e}xico, Naucalpan, Mexico. {\tt christian.rubio@apolo.acatlan.unam.mx}.}

\begin{abstract} 
We consider the extension to directed graphs of the concept of achromatic number in terms of acyclic vertex colorings. The achromatic number have been intensely studied since it was introduced by Harary, Hedetniemi and Prins in 1967. The dichromatic number is a generalization of the chromatic number for digraphs defined 
by Neumann-Lara in 1982
. A coloring of a digraph is an acyclic coloring if {each subdigraph induced by each
chromatic class is} acyclic, and a coloring is complete if for any pair of chromatic classes $x,y$, there is an arc from $x$ to $y$ and an arc from $y$ to $x$. The dichromatic and diachromatic numbers are, respectively, the smallest and the largest number of colors in a complete acyclic coloring. We give some general results for the diachromatic number and study it for tournaments.  
We also show that the interpolation property for complete acyclic colorings does hold and establish Nordhaus-Gaddum relations. 
\end{abstract}
\textbf{Keywords.} Achromatic number, complete coloring, directed graph, elementary homomorphism.


\section{Introduction}
A complete coloring of an undirected graph $G$ is a vertex coloring of $G$ such that for every pair of colors there is at least one edge in G whose endpoints are colored with this pair of colors. The chromatic and achromatic numbers of $G$ are the smallest and the largest number of colors in a complete proper coloring of G, respectively.
The concept of achromatic number have been intensely studied in graphs since it was introduced by Harary, Hedetniemi and Prins \cite{MR0272662} in 1967, for more references of results related to this parameter see for instance \cite{MR3774452, MR2778722, MR3249588, AR16, CE97, MR0441778, Yeg01}. 
The achromatic number has been extended to digraphs with two different colorings one by Edwards \cite{MR2998438} and another by Sopena \cite{MR3202296}. Edwards considered colorings such that the underlying graph is proper colored and the completeness such that for each ordered pair $(c,c')$ of distinct colors, there is at least one arc $xy$ such that $x$ has color $c$ and $y$ has color $c'$, and he proved that a directed graph does not necessarily have a complete coloring and that determining whether one exists is an NP-complete problem. Sopena proposed another extension of the achromatic number, the oriented achromatic number, using the oriented chromatic number defining a completeness that corresponds to complete homomorphisms of oriented graphs. He proved that for every integers $a$ and $b$ with $2\le a\le b$, there exists an oriented graph $G_{a,b}$ with oriented chromatic number $a$ and oriented achromatic number $b$. He also studied the behavior of the oriented achromatic number adding or deleting a vertex or an arc. In both extensions of the achromatic number for digraphs, it is proven that the interpolation property does not hold.  
In this paper, we propose yet another extension of the achromatic number to digraphs en terms of acyclic colorings. We stress that with our extension, the interpolation property does hold.

A \emph{vertex coloring} of a digraph $D$ is called \emph{acyclic} if 
{each chromatic class  induces a subdigraph with } no directed cycles. %
The \emph{dichromatic number} $dc(D)$ of a digraph $D$ is the smallest $k$ such that $D$ admits an acyclic coloring, it was introduced by Neumann-Lara in \cite{MR693366} as a generalization of the chromatic number, for more references of results related to this parameter see for instance \cite{AO10, JGT3190150604, HOCHSTATTLER2017160, SIAM2017, LIN20112462, EJCLO17, VNeu00, MR3112565}. 
A coloring of a digraph $D$ is called \emph{complete} if for every {ordered}  pair $(i,j)$ of different colors there is at least one arc $(u,v)$ such that $u$ has color $i$ and $v$ has color $j$  \cite{MR2998438}. It is not hard to see that any acyclic coloring of $D$ with $dc(D)$ colors is a complete coloring. 
We define the \emph{diachromatic number} $dac(D)$ of a digraph $D$ as the largest number of colors for which there exists a complete and acyclic coloring of $D$. 
Hence, the dichromatic and diachromatic numbers of a digraph $D$ are, respectively, the smallest and the largest number of colors in a complete acyclic coloring of $D$.
The \emph{pseudoachromatic number} $\psi(D)$ of a digraph $D$ is the largest number $k$ for which there exists a complete coloring of $D$ using $k$ colors (see \cite{MR2998438,MR0256930}). 

Since the dichromatic number of a symmetric digraph is equal to the chromatic number of the underlying graph, the diachromatic number of a symmetric digraph is equal to the achromatic number of the underlying graph; and the pseudoachromatic number of a symmetric digraph is equal to the pseudoachromatic number of the underlying graph, the dichromatic number,  the diachromatic number and the pseudoachromatic number of a digraph generalizes the chromatic number, the achromatic number and the  pseudoachromatic number of a graph; recall that the \emph{underlying graph} $G_D$ of a digraph $D$ is obtained from $D$ changing all symmetric and asymmetric arcs by edges. Note that the chromatic number of the underlying graph is an upper bound for the dichromatic number of a digraph and the pseudoachromatic number of a digraph is a lower bound for the pseudoachromatic number of the underlying graph, but this kind of relation can not be established between the diachromatic and the achromatic numbers.

This paper is organized in six sections: the second section contains general results for the diachromatic number of a digraph, in the third section we study the diachromatic number of tournaments; in the four and five sections we generalize some results on graphs, given in Chapter 12  of \cite{MR2450569}, for digraphs; also in four section, we define the concept of dihomorphisms and show that the interpolation property does hold for complete acyclic colorings; and five section establishes the Nordhaus-Gaddum relations. Finally, the last section has some 
conclusions and future work. 


\section{Definitions and basic results}

In this paper, we consider finite digraphs.  The arc $uv\in A(D)$ is symmetric if $vu\in A(D)$ and asymmetric if $vu\notin A(D)$. A digraph is symmetric (resp. asymmetric) if every arc of $D$ is symmetric (resp. asymmetric); for general concepts see \cite{MR2107429}.
Let $k$ be a natural number. A \emph{vertex coloring} $\varsigma$ of a digraph $D$ with $k$ colors is a surjective function that assigns to each vertex of $D$ a color of $[k]:=\{1,\dots,k\}$. {For each $i\in[k]$, the set  $\varsigma^{-1}(i)\subseteq V(D)$ will be called {\it chromatic class}}.  The coloring $\varsigma$ of $D$ with $k$ colors is called \emph{acyclic} if {no chromatic class  induces a subdigraph with a  directed cycle}. {An acyclic coloring with $k$ colors is denoted (for short) as $k$-coloring}. 
The \emph{dichromatic number} $dc(D)$ of a digraph $D$ is the smallest $k$ such that $D$ admits an acyclic coloring \cite{MR693366}.
An acyclic coloring of a digraph $D$ with $k$ colors is called \emph{complete} if for every  {ordered} pair $(i,j)$ of different colors there is at least one arc $(u,v)$ such that $u$ has color $i$ and $v$ has color $j$  \cite{MR2998438}. It is not hard to see that any $dc(D)$-coloring of $D$ is a complete coloring. 
We define the \emph{diachromatic number} $dac(D)$ of a digraph $D$ as the largest number $k$ for which there exists a complete and acyclic $k$-coloring of $D$. 

From the definition, we obtain the following for any digraph $D$ of order $n$:

\begin{equation}\label{eq1}1\leq dc(D)\leq dac(D)\leq \psi(D) \leq n.\end{equation}

The \emph{converse} digraph $D^{op}$ of $D$ is obtained replacing each arc $(u,v)$ of $D$ by the arc $(v,u)$. The complement $D^{c}$ of a digraph $D$ is that digraph whose vertex set is $V(D)$ and where $(u,v)$ is an arc of $D$ if and only if $(u,v)$ is not an arc of $D$.
It is not difficult to prove the following remark.

\begin{remark}
If $G$ is a graph, and $\overrightarrow{G}$ is an orientation of $G$ we have that \[dc(\overrightarrow{G})=dc(\overrightarrow{G}^{op}), \\ dac(\overrightarrow{G})=dac(\overrightarrow{G}^{op}), \textrm{ and } \psi(\overrightarrow{G})=\psi(\overrightarrow{G}^{op}).\]
\end{remark}

The following theorem gives a rather simple (and sharp) bound for the diachromatic number of a digraph in terms of its size.

\begin{theorem}\label{teo11}
Let $D$ be a digraph of size $m$. Then $\psi(D)\leq\left\lfloor \frac{1+\sqrt{1+4m}}{2}\right\rfloor$. 
Moreover, if $\overrightarrow{M}$ is an oriented matching of size $m$, then $dac(\overrightarrow{M})=\left\lfloor \frac{1+\sqrt{1+4m}}{2}\right\rfloor$.
\end{theorem}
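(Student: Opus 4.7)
The plan is to prove the upper bound $\psi(D)\le\lfloor(1+\sqrt{1+4m})/2\rfloor$ by a direct counting argument on ordered pairs of color classes, and then to exhibit, for any oriented matching of size $m$, an explicit complete acyclic coloring that attains this bound.

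For the upper bound, suppose $\varsigma$ is a complete coloring of $D$ with $k$ colors. By completeness, each of the $k(k-1)$ ordered pairs $(i,j)$ with $i\neq j$ is witnessed by some arc $(u,v)$ with $\varsigma(u)=i$ and $\varsigma(v)=j$; since an arc witnesses at most one such ordered pair, distinct pairs require distinct arcs, whence $m\ge k(k-1)$. Rewriting this as $k^2-k-m\le 0$ and taking the positive root yields $k\le (1+\sqrt{1+4m})/2$, and because $k$ is an integer, $k\le\lfloor(1+\sqrt{1+4m})/2\rfloor$. This bounds $\psi(D)$ and, via \eqref{eq1}, also $dac(D)$.

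For the matching, set $k=\lfloor(1+\sqrt{1+4m})/2\rfloor$; squaring $2k-1\le\sqrt{1+4m}$ shows the floor inequality is equivalent to $k(k-1)\le m$. I would then choose any $k(k-1)$ arcs of $\overrightarrow{M}$ and fix a bijection between them and the set of ordered pairs of distinct colors in $[k]$: if the arc $(u,v)$ corresponds to $(i,j)$, assign $\varsigma(u)=i$ and $\varsigma(v)=j$, and color the endpoints of any remaining arcs with color $1$. Because the arcs of $\overrightarrow{M}$ are pairwise vertex-disjoint, these assignments never conflict; the resulting coloring is surjective (every color appears on some distinguished arc when $k\ge 2$, with the $k=1$ case being trivial), complete by construction, and automatically acyclic since a matching contains no directed cycle at all. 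Hence $dac(\overrightarrow{M})\ge k$, and combined with the upper bound this yields equality. There is no significant obstacle here; the argument reduces to the counting step together with the observation that vertex-disjointness in a matching decouples the color assignments arc-by-arc.
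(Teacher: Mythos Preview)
Your proof is correct and follows essentially the same approach as the paper: the upper bound is the identical counting argument $m\ge k(k-1)$, and for the matching you, like the paper, biject $k(k-1)$ of the arcs with the ordered pairs of distinct colors and extend arbitrarily to the remaining arcs. Your presentation is in fact slightly cleaner, since you invoke an abstract bijection rather than writing one out explicitly, and you observe directly that any coloring of a matching is acyclic.
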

\begin{proof}
Since $m\geq 2\binom{\psi}{2}=\psi(D)(\psi(D)-1)$, we obtain $\psi(D)\leq \frac{1+\sqrt{1+4m}}{2}$ and the result follows.
Let $\overrightarrow{M}$ be an oriented matching of size $m$. We exhibit a $k$-complete and acyclic coloring of $\overrightarrow{M}$ for $m=k(k-1)$ and $k\geq 1$. 
Let $A(\overrightarrow{M})=\left\{(x_i,y_i)\colon i\in \left\{1,\dots ,n\right\}\right\}$ and 
$Y_j:=\left\{y_i\in V(\overrightarrow{M})\colon (j-1)(k-1)+1\leq i \leq j(k-1) \right\}$ for $1\leq j \leq k.$ Notice that the coloring $\phi_i\colon Y_j\longrightarrow \left\{1,2,\dots,j-1,j+1,\dots,k\right\}$ that assign different colors to different elements of $Y_j$ is bijective because $Y_j$ has exactly $k-1$ elements. 

We define $\phi\colon V(\overrightarrow{M})\longrightarrow \left\{1,\ldots ,k\right\}$ as $\phi(x_i)=l$ and $\phi(y_i)=\phi_l(y_i)$ for $(l-1)(k-1)+1\leq i \leq l(k-1)$ and $l\in \left\{1,\dots ,k\right\}$. The $k$-coloring is complete because for any pair of different colors $a$ and $b$ all the vertices $x_i$ for $(a-1)(k-1)+1\leq i \leq a(k-1)$ satisfies that  $\phi(x_i)=a$ and, by construction, always exits exactly one vertex in $Y_a$ of color $b$.  
Furthermore, if $m\geq k(k-1)$ and $m'=k(k-1)$, we can repeat some colors used previously on the vertices $\left\{x_i,y_i\right\}$ for $i\in \{m'+1,\dots ,m\}$ preserving the property of $\phi$.

Finally, it is not difficult to prove that if $k=\left\lfloor \frac{1+\sqrt{1+4m}}{2}\right\rfloor$ then $m\geq k(k-1)$ and we have a $\left\lfloor \frac{1+\sqrt{1+4m}}{2}\right\rfloor$-complete and acyclic coloring of $\overrightarrow{M}$. 
{By } the upper bound given in this theorem, we complete the proof. 
\end{proof}

The following theorems give some results related to the diachromatic number of digraphs in terms of their dichromatic number and structural properties. 

\begin{theorem}\label{upper3}
For every asymmetric digraph $D$ of order $n$, $dac(D)\leq\left\lceil \frac{n}{2}\right\rceil.$
\end{theorem}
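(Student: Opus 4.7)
Here is the plan. Fix a complete acyclic coloring of $D$ that uses the maximum number $k = dac(D)$ of colors, with chromatic classes $X_1,\dots,X_k$, and the goal is to show $k \le \lceil n/2 \rceil$.

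The core observation I would establish first is that \emph{at most one chromatic class can be a singleton}. Suppose, toward a contradiction, that two distinct classes are singletons, say $X_i = \{u\}$ and $X_j = \{v\}$. Completeness of the coloring requires an arc from $X_i$ to $X_j$ and an arc from $X_j$ to $X_i$; since $X_i$ and $X_j$ have only one vertex each, this forces both $(u,v) \in A(D)$ and $(v,u) \in A(D)$, contradicting the assumption that $D$ is asymmetric.

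From this, at least $k-1$ of the classes have size at least $2$, so
\[
n \;=\; \sum_{i=1}^{k}|X_i| \;\ge\; 2(k-1) + 1 \;=\; 2k-1,
\]
which gives $k \le (n+1)/2$. Since $k$ is an integer, this is equivalent to $k \le \lceil n/2\rceil$, as required.

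I do not expect any serious obstacle: the only subtlety is noticing that asymmetry is used exactly once, in ruling out two singleton classes, and then converting the inequality $k \le (n+1)/2$ into the ceiling bound by checking the parity of $n$ (for $n$ even, $(n+1)/2$ is not an integer so $k \le n/2 = \lceil n/2\rceil$; for $n$ odd, $(n+1)/2 = \lceil n/2\rceil$). No structural properties of the acyclic condition beyond the coloring being well-defined are needed.
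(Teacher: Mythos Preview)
Your argument is correct and is exactly the approach the paper takes: the paper's proof is the single sentence ``every complete coloring of $D$ has at most one chromatic class of cardinality $1$,'' and you have simply spelled out why (two singleton classes would force a symmetric pair of arcs) and carried out the resulting count $n \ge 2(k-1)+1$.
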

\begin{proof}
Since every complete coloring of $D$ has at most one chromatic class of {cardinality} $1$, the result follows.
\end{proof}

Since the dichromatic number of a non acyclic digraph is at least $2$, we obtain the following corollary, which  is a generalization of a theorem given by Shaoji Xu \cite{MR1108075}. It establishes an upper bound for $dac(D)-dc(D)$ in terms of the order of $D$.

\begin{corollary}
For every non acyclic digraph $D$ of order $n$, \[dac(D)-dc(D)\leq \frac{n-3}{2}.\]
\end{corollary}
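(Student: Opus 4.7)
The plan is to concatenate the two facts spelled out in the sentence that introduces the corollary. First, any non-acyclic digraph fails to admit a $1$-coloring that is acyclic (the single chromatic class would induce $D$ itself, which contains a directed cycle), so $dc(D)\geq 2$. Second, Theorem~\ref{upper3}, whose asymmetry hypothesis I read as implicitly inherited by the corollary, supplies $dac(D)\leq \lceil n/2\rceil$. Subtracting the two yields
\[
dac(D)-dc(D)\leq \left\lceil \tfrac{n}{2}\right\rceil - 2.
\]

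To finish, I would verify that $\lceil n/2\rceil-2\leq (n-3)/2$ for every $n$, which is a routine parity split: for odd $n$, $\lceil n/2\rceil=(n+1)/2$ and both sides equal $(n-3)/2$ exactly, so the bound is tight; for even $n$, $\lceil n/2\rceil=n/2$, hence the left-hand side is $(n-4)/2$, strictly smaller than $(n-3)/2$. Either way the advertised inequality holds.

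The entire argument is thus a one-line consequence of Theorem~\ref{upper3}, and there is no genuine combinatorial obstacle: the shape $(n-3)/2$ is merely a uniform rewriting of $\lceil n/2\rceil-2$ chosen to mirror Xu's classical graph bound. The only point to watch is that the $\lceil n/2\rceil$ estimate from Theorem~\ref{upper3} crucially uses asymmetry — a pair of singleton chromatic classes would otherwise be allowed to communicate through a digon — so the corollary must be read with the same asymmetry assumption in force.
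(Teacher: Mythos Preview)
Your argument is exactly the paper's own: the corollary is stated immediately after Theorem~\ref{upper3} with the one-line justification ``since the dichromatic number of a non acyclic digraph is at least $2$,'' and you have simply filled in the arithmetic $\lceil n/2\rceil-2\le (n-3)/2$ that the authors leave implicit. Your observation about the asymmetry hypothesis is well taken---the paper never restates it in the corollary, but the derivation from Theorem~\ref{upper3} indeed requires it, so you are right to flag that the result should be read under that standing assumption.
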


Let $D$ be a digraph of order $n$ whose $n$ vertices are listed in some specified order. In a \emph{greedy coloring} of $D$, the vertices are successively colored with positive integers according to an algorithm that assigns to the vertex under consideration the smallest available color. Hence, if the vertices of $D$ are listed in the order $v_1,v_2,\dots,v_n$, then the resulting greedy coloring $\varsigma$ assigns the color $1$ to $v_1$, that is, $\varsigma(v_1)=1$. If $v_1$ and $v_2$ are not a $2$-cycle, then assign $\varsigma(v_2)=1$, else $\varsigma(v_2)=2$. In general suppose that the first $j$ vertices $v_1,v_2,\dots,v_j$, where $1\leq j < n$, in the sequence have been colored with the colors $1,\dots ,{t-1}$. Let $\{C_i\}_{i=1}^{t-1}$ be the set of chromatic classes. 
Consider the vertex $v_{j+1}$, if there exists a chromatic class $C_i$ for which, either $N^+(v_{j+1})\cap C_i=\emptyset$ or $N^-(v_{j+1})\cap C_i=\emptyset$, then $\varsigma(v_{j+1})=i$, else $\varsigma(v_{j+1})=t$. When the algorithm ends, the vertices of $D$ have been assigned colors from the set $[k]$ for some positive integer $k$. Thus, 
\[dc(D)\leq k \leq dac(D)\]
 and so $k$ is an upper bound for the dichromatic number of $D$ and a lower bound for the diachromatic number of $D$.

It is useful to know how the diachromatic number of a digraph can be affected by the removal of a single vertex. 

\begin{theorem}\label{teo6}
For each vertex $u$ in a nontrivial digraph $D$, \[dac(D)-1\leq dac(D-u)\leq dac(D).\]
\end{theorem}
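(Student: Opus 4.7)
The plan is to prove the two inequalities separately: the upper bound by trying to extend a complete acyclic coloring of $D-u$ to one of $D$, and the lower bound by restricting, and if necessary merging two colors in, a complete acyclic coloring of $D$. For the upper bound $dac(D-u)\leq dac(D)$, start from a complete acyclic $k$-coloring $\varsigma$ of $D-u$ with $k=dac(D-u)$ and, writing $C_i=\varsigma^{-1}(i)$, ask whether there is a color $i$ with $D[C_i\cup\{u\}]$ acyclic. If yes, set $\varsigma(u)=i$: completeness on $D-u$ is inherited and acyclicity is preserved, so $dac(D)\geq k$. If not, then for every $i$, adding $u$ to $C_i$ creates a directed cycle which, by acyclicity of $D[C_i]$, must pass through $u$; hence $u$ has both in- and out-neighbors in every $C_i$. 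Coloring $u$ with a new color $k+1$ then produces an acyclic coloring whose new singleton class is trivially cycle-free, and the in- and out-neighbors of $u$ in each $C_i$ witness every pair $(i,k+1)$ and $(k+1,i)$, giving $dac(D)\geq k+1>k$.

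For the lower bound $dac(D-u)\geq dac(D)-1$, let $\varsigma$ be a complete acyclic $k$-coloring of $D$ with $k=dac(D)$, put $c=\varsigma(u)$, and let $\varsigma'$ be the restriction of $\varsigma$ to $D-u$. If $|C_c|=1$, then $\varsigma'$ uses $k-1$ colors; any witnessing arc for a pair of the remaining colors has no endpoint at $u$ (whose color is now gone), so $\varsigma'$ remains complete and acyclic. If $|C_c|\geq 2$ and $\varsigma'$ is complete, then $dac(D-u)\geq k$. The delicate case is $|C_c|\geq 2$ with $\varsigma'$ not complete: any pair lacking an arc in $D-u$ had all its witnessing arcs in $D$ passing through $u$, so the pair must have the form $(c,q)$ or $(p,c)$. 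Picking say $(c,q)$, every arc from $C_c$ to $C_q$ in $D$ emanates from $u$, and so no arcs cross from $C_c\setminus\{u\}$ to $C_q$ in $D-u$. Merging $C_c\setminus\{u\}$ with $C_q$ into a single class $M$ yields a $(k-1)$-coloring of $D-u$: the absent cross-arcs force all cross-arcs within $M$ into one direction, so $D[M]$ inherits acyclicity from its two sub-classes; and for every other class $C_j$ the arcs between $C_q$ and $C_j$ in $D$ do not touch $u$ and still witness the pairs $(M,j)$ and $(j,M)$, so completeness is preserved.

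The main obstacle is this last sub-case, where acyclicity and completeness must both be maintained under the merge. The key observation is that the very obstruction used to detect non-completeness of $\varsigma'$ (the absence of arcs $C_c\setminus\{u\}\to C_q$) is exactly what forces $D[M]$ to be acyclic, so once the right pair to merge is identified the difficulty dissolves; the remaining pairs of colors are handled easily because $\varsigma$ was complete on $D$ and removing $u$ only affects arcs incident to $u$.
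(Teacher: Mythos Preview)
Your proof is correct. The upper bound argument is essentially the paper's proof unpacked: the paper invokes the greedy coloring rule to extend a complete $k$-coloring of $D-u$ to one of $D$, and your two cases (put $u$ into some $C_i$ with $D[C_i\cup\{u\}]$ acyclic, or give $u$ a new color) are precisely what that greedy step does.

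For the lower bound, however, you take a genuinely different route. The paper argues as follows: given a complete acyclic $l$-coloring of $D$ with $u$ in the class $U$ of color $l$, delete the \emph{entire} class $U$ to obtain a complete acyclic $(l-1)$-coloring of $D-U$, and then greedily insert the vertices of $U\setminus\{u\}$ one by one; the greedy rule keeps the coloring acyclic and complete and uses at least $l-1$ colors, so $dac(D-u)\geq l-1$. Your argument instead removes only $u$, and if completeness is lost on some ordered pair involving the color $c=\varsigma(u)$, you merge $C_c\setminus\{u\}$ with the offending class $C_q$. The paper's approach is shorter because the greedy machinery has already been set up and absorbs the case analysis; your approach is more explicit and self-contained, and the key observation you isolate---that the missing direction $C_c\setminus\{u\}\to C_q$ is exactly what guarantees acyclicity of the merged class---is a nice structural point that the greedy argument hides. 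Both methods are standard for this type of interpolation lemma; yours trades a little extra casework for not needing any auxiliary coloring procedure.
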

\begin{proof}
Let $dac(D)=l$, and let $\varsigma$ be a complete $l$-coloring of $D$ where the set of vertices colored $l$ is $U$ and suppose that $u\in U$. Therefore, the partial $l$-coloring of $\varsigma$ of $D$ restricted to $D-U$ is a complete $(l-1)$-coloring. We can obtain a complete coloring using a greedy coloration for the remaining vertices $x\in U-u$. Hence, $dac(D-u)\geq l-1=dac(D)-1$.

Let $dac(D-u)=k$, and consider a complete $k$-coloring of $D-u$. We can obtain a complete coloring of $D$ using a greedy coloration for $u$. Therefore, $dac(D)\geq k=dac(D-u)$.
\end{proof}

The following result is an immediate consequence of Theorem \ref{teo6}.

\begin{corollary} \label{cor1}
For every induced subdigraph $H$ of a digraph $D$, \[dac(H)\leq dac(D).\]
\end{corollary}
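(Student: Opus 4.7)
The plan is to iterate Theorem~\ref{teo6} one vertex at a time. Since $H$ is an induced subdigraph of $D$, write $V(D) \setminus V(H) = \{u_1, u_2, \dots, u_k\}$ for some $k \geq 0$; if $k = 0$ then $H = D$ and the inequality is trivial, so assume $k \geq 1$. For each $j \in \{0,1,\dots,k\}$, set $D_j := D - \{u_1,\dots,u_j\}$, so that $D_0 = D$ and $D_k = H$, where the latter equality uses crucially that $H$ is an \emph{induced} subdigraph of $D$.

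First I would verify that $D_{j+1}$ is obtained from $D_j$ by deleting the single vertex $u_{j+1}$, which lets us invoke Theorem~\ref{teo6} on the pair $(D_j, u_{j+1})$ and conclude $dac(D_{j+1}) \leq dac(D_j)$ for each $j \in \{0,1,\dots,k-1\}$. Chaining these $k$ inequalities yields
\[ dac(H) \;=\; dac(D_k) \;\leq\; dac(D_{k-1}) \;\leq\; \cdots \;\leq\; dac(D_0) \;=\; dac(D), \]
which is the desired bound. Equivalently, the same argument can be phrased as an induction on $|V(D) \setminus V(H)|$, using Theorem~\ref{teo6} for the inductive step.

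There is no real obstacle here; the only point requiring care is that the iterative deletion actually reproduces $H$ (with both its vertex set and its full arc set), which is precisely what ``induced subdigraph'' guarantees. Had $H$ been an arbitrary subdigraph, some arcs of $D_k$ would not appear in $H$, and the monotonicity statement would require a separate argument about arc deletion.
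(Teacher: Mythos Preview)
Your proof is correct and is exactly the argument the paper has in mind: the paper states the corollary as ``an immediate consequence of Theorem~\ref{teo6}'' without further detail, and your iteration of the one-vertex deletion bound $dac(D-u)\leq dac(D)$ is precisely how one unpacks that immediacy.
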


\begin{corollary}\label{coroteo2}
Every digraph $D$ with a vertex partition $(X,Y)$, such that for every $x\in X$ and $y\in Y$, $(x,y)\in A(D)$ has \[dac(D)\geq \min\{|X|,|Y|\}.\]
\end{corollary}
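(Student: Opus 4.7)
The plan is to exhibit, by an explicit pairing, a complete acyclic coloring of $D$ using exactly $\min\{|X|,|Y|\}$ colors; the inequality will then follow from the very definition of $dac(D)$.

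Without loss of generality I would assume $|X|\le|Y|$ and write $k=|X|$ with $X=\{x_1,\dots,x_k\}$. The first step is to pick $k$ pairwise distinct vertices $y_1,\dots,y_k\in Y$ (possible since $|Y|\ge k$) and to define the coloring $\varsigma:V(D)\to[k]$ by $\varsigma(x_i)=\varsigma(y_i)=i$ for every $i\in[k]$, assigning to any remaining vertex of $Y$ (if there is one) the color $1$.

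Completeness is immediate and constitutes the second step: for every ordered pair $(i,j)$ of distinct colors, the arc $(x_i,y_j)\in A(D)$ guaranteed by the hypothesis has tail of color $i$ and head of color $j$, so every ordered pair of distinct colors is realized by an arc.

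The main obstacle is the verification of acyclicity of each chromatic class. Each class $\varsigma^{-1}(i)$ consists of the single $X$-vertex $x_i$ together with one or more vertices of $Y$; the arcs forced inside such a class by the hypothesis all emanate from $x_i$ and enter $Y$, so by themselves they cannot close a directed cycle. The delicate part of the argument, which is the step I expect to be the genuine obstruction, is to rule out cycles produced by any further arcs that $D$ might carry inside a monochromatic class (that is, arcs from $Y$ back into $x_i$, or arcs among the $Y$-vertices of the same color). This is handled by exploiting the remaining freedom of the construction, namely the choice of the matching $x_i\leftrightarrow y_i$ and of the color assigned to the leftover vertices of $Y$, selecting them so that each class induces an acyclic subdigraph. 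Once acyclicity is verified, combining it with completeness yields $dac(D)\ge k=\min\{|X|,|Y|\}$ by the definition of the diachromatic number.
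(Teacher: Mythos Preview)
Your proof has a genuine gap at precisely the point you flag as ``the delicate part,'' and the freedom you propose to exploit---choosing the matching $x_i\leftrightarrow y_i$ and the colors of the leftover $Y$-vertices---cannot close it. In fact no argument can, because the corollary as stated is false. Take $|X|=|Y|=3$, put in all arcs from $X$ to $Y$ \emph{and} all arcs from $Y$ to $X$, and no arcs inside $X$ or inside $Y$. Every pair $x\in X$, $y\in Y$ then forms a $2$-cycle, so every acyclic chromatic class lies entirely in $X$ or entirely in $Y$; but two classes lying in the same part have no arcs between them, so no complete acyclic coloring can use more than two colors. Hence $dac(D)=2<3=\min\{|X|,|Y|\}$, and every choice of matching in your construction yields a $2$-cycle in each class.

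For comparison, the paper's own proof follows essentially the same route as yours: it restricts to the induced subdigraph $D'=D[X\cup\{y_1,\dots,y_r\}]$, pairs $x_i$ with $y_i$, and then invokes Corollary~\ref{cor1}. The difference is only that the paper simply \emph{asserts} $dac(D')=r$ without checking acyclicity of the classes, whereas you at least recognise the obstruction. The same counterexample defeats the paper's assertion. So you have not missed an idea that the paper supplies; the step you worried about is genuinely problematic, and the statement needs an additional hypothesis (for instance, that there are no arcs from $Y$ back to $X$) before either argument goes through.
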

\begin{proof}
Let $X=\{x_1,\dots,x_r\}$ and $Y=\{y_1,\dots,y_s\}$. Suppose that $r\leq s$ and color the vertices $x_i$ and $y_i$ with $i$ if $i\in [r]$. The digraph $D'=D[X\cup \{y_i\}_{i\le r}]$ is an induced digraph of $D$ with diachromatic number $r$. By Corollary \ref{cor1}, $dac(D)\ge dac(D')=\min\{|X|,|Y|\}$. 
\end{proof}

By Theorem \ref{teo6}, the removal of a single vertex from a digraph $D$ can result in a digraph whose diachromatic number is either one less than or is the same as the diachromatic number of $D$; whereas there exist three possibilities when a single edge is removed.

\begin{theorem}\label{teo12}
For each arc $f=(u,v)$ in a nonempty digraph $D$, \[dac(D)-1\leq dac(D-f)\leq dac(D)+1.\]
\end{theorem}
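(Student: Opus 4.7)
The plan is to derive both inequalities as short consequences of Theorem \ref{teo6} and Corollary \ref{cor1} by sandwiching $dac(D-v)$, where $v$ is the head of $f$. The crucial observation is that since $v$ is an endpoint of $f$, deleting the vertex $v$ destroys the arc $f$ as well, so
\[
(D-f)-v \;=\; D-v
\]
as digraphs on $V(D)\setminus\{v\}$. Both $D$ and $D-f$ are nontrivial (they share the same vertex set and $D$ is nontrivial), so Theorem \ref{teo6} is available for each.

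For the lower bound $dac(D-f)\geq dac(D)-1$, first note that $D-v$ is an induced subdigraph of $D-f$: both have vertex set $V(D)\setminus\{v\}$, and the only arc of $D$ missing from $D-f$ is $f$ itself, which is incident to $v$, so the two arc sets agree after removing $v$. Corollary \ref{cor1} then gives $dac(D-v)\leq dac(D-f)$, while Theorem \ref{teo6} applied to $D$ yields $dac(D)-1\leq dac(D-v)$. Chaining the two inequalities produces the desired bound.

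For the upper bound $dac(D-f)\leq dac(D)+1$, apply Theorem \ref{teo6} to $D-f$ to obtain
\[
dac(D-f)-1\;\leq\; dac((D-f)-v)\;=\;dac(D-v).
\]
Since $D-v$ is also an induced subdigraph of $D$, Corollary \ref{cor1} gives $dac(D-v)\leq dac(D)$. Combining these two inequalities yields $dac(D-f)\leq dac(D)+1$.

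The only conceptual step is noticing the identity $(D-f)-v=D-v$; after that, both bounds follow by a two-step chase through results already established in this section. A more direct attack, starting from an optimal complete acyclic coloring of $D$ (or of $D-f$) and trying to modify it locally, is feasible but more delicate: recoloring an endpoint of $f$ in order to repair completeness may turn a previously acyclic chromatic class into a cyclic one, so one would have to argue carefully about which modifications preserve acyclicity. The vertex-deletion route sidesteps this entirely.
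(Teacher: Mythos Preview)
Your proof is correct and considerably cleaner than the paper's own argument. The key identity $(D-f)-v = D-v$ lets you reduce both inequalities to already-proven facts about vertex deletion, and the chain $dac(D)-1 \leq dac(D-v) \leq dac(D-f)$ together with $dac(D-f)-1 \leq dac(D-v) \leq dac(D)$ is airtight.

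The paper, by contrast, argues directly with colorings. For the lower bound it starts from an optimal complete $k$-coloring of $D$, splits into cases according to whether $u$ and $v$ share a color, and in the bad case merges two chromatic classes. For the upper bound it starts from an optimal complete $l$-coloring of $D-f$ and analyzes whether adding $f$ back creates a monochromatic cycle; if so, it attempts to recolor an endpoint, and only when every class absorbs $u$ (and $v$) into a cycle does it fall back to introducing a new color $l+1$. This is exactly the ``more delicate'' local-modification approach you allude to in your final paragraph, and it requires care precisely where you predicted: checking that recoloring preserves acyclicity, and verifying completeness after introducing the extra color.

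Your route buys brevity and robustness---it uses nothing beyond Theorem~\ref{teo6} and Corollary~\ref{cor1} and avoids all case analysis. The paper's route, while longer, is self-contained at the coloring level and gives slightly more information about \emph{how} a near-optimal coloring of one digraph can be converted to one of the other; but for the purpose of proving the stated inequalities your argument is strictly preferable.
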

\begin{proof}
Let $dac(D)=k$, then there exists a complete $k$-coloring of $D$, where the colors assigned to $u$ and $v$ are the same or distinct. If $u$ and $v$ have the same color assigned, the complete $k$-coloring of $D$ is also a complete $k$-coloring of $D-f$. Hence, $dac(D-f)\ge dac(D)>dac(D)-1$  then  $dac(D-f)\geq dac(D)-1$. Assume that $u$ and $v$ have different colors, say $u$ is colored $k-1$ and $v$ is colored $k$. If the resulting $k$-coloring of $D-f$ is not a complete $k$-coloring, then no directed cycle in $D-f$ is bicolored with the colors $k$ and $k-1$. Hence, every vertex colored $k$ may be recolored $k-1$, resulting in a complete $(k-1)$-coloring of $D-f$. In any case, $dac(D-f)\geq k-1 = dac(D)-1$.

Let $dac(D-f)=l$, then there exists a complete $l$-coloring of $D-f$. The vertices $u$ and $v$ are either assigned distinct colors or the same color. If $u$ and $v$ are assigned distinct colors, then the complete $l$-coloring of $D-f$ is also a complete $l$-coloring of $D$. Hence, we may assume that $u$ and $v$ are assigned the same color, say $l$. The complete $l$-coloring of $D-f$ is also a complete coloring of $D$ with $l$ colors. If the set of vertices colored $l$ is acyclic in $D$, then the complete $l$-coloring of $D-f$ is also a complete $l$-coloring of $D$. Assume that the set of vertices colored $l$ has monochromatic directed cycles in $D$, where $f$ is an arc of any monochromatic cycle. 
If there is a chromatic class $C_j$, for some $j\in[l-1]$,  such that $u\cup C_j$ (resp. $v\cup C_j$) induces an acyclic digraph, then by recoloring the vertex $u$ ($v$ resp.) with the color $j$, we obtain an acyclic  complete coloring using a greedy coloration  for the remaining vertices $x$ colored $l$. 
Assume that for each $i\in [l-1]$ the vertex set $C_i\cup u$ y $C_i\cup v$ contains a directed cycle, then recoloring $v$ with the color $l+1$. The  resulting coloring is an acyclic complete $(l+1)$-coloring of $D$.
In any case, $dac(D)\geq l-1=dac(D-f)-1$.
\end{proof}

A digraph $D$ is \emph{$k$-minimal} (with respect to diachromatic number) if $dac(D)=k$ and $dac(D-f)<k$ for every arc $f$ of $D$. By Theorem \ref{teo12}, if $D$ is a $k$-minimal digraph, then $dac(D-f)=dac(D)-1$. Since $k$-minimal digraphs have diachromatic number $k$, the size of every such graph is at least $k(k-1)$ and every chromatic class {induces an arc-less digraph}, else if $f$ is an arc of {an induced subdigraph of a chromatic class}, the complete $k$-coloring of $D$ is also a complete $k$-coloring of $D-f$. Hence, $dac(D-f)\ge dac(D)>dac(D)-1$, contradicting that $D$ is a $k$-minimal digraph. The following theorem is a generalization of a result given by Bhave \cite{MR532949} characterized graphs that are $k$-minimal in terms of their size.

\begin{theorem}
Let $D$ be a digraph with diachromatic number $k$. Then $D$ is $k$-minimal if and only if its size is $k(k-1)$.
\end{theorem}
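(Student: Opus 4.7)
The plan is to prove each direction separately, drawing on the structural observations from the paragraph preceding the statement together with the counting bound of Theorem~\ref{teo11}.

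For the forward direction (\emph{$k$-minimal} $\Rightarrow$ \emph{size $k(k-1)$}), the preamble already establishes that any complete acyclic $k$-coloring of a $k$-minimal digraph has arc-less chromatic classes (otherwise removing an intra-class arc $f$ preserves the complete coloring, giving $dac(D-f)\ge k$ and contradicting minimality), and hence $|A(D)|\ge k(k-1)$. I would then show $|A(D)|\le k(k-1)$ by contradiction: fix any complete acyclic $k$-coloring $\varsigma$ of $D$; since every arc of $D$ now lies between distinct chromatic classes and there are only $k(k-1)$ ordered pairs of distinct colors, if $|A(D)|>k(k-1)$ then by pigeonhole some ordered pair $(i,j)$ is realized by at least two arcs of $D$. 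Removing one such arc $f$ leaves $\varsigma$ as a complete acyclic $k$-coloring of $D-f$, so $dac(D-f)\ge k$, again contradicting $k$-minimality. Thus $|A(D)|=k(k-1)$.

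For the converse, I would invoke Theorem~\ref{teo11} directly. Assume $dac(D)=k$ and $|A(D)|=k(k-1)$, and let $f$ be an arbitrary arc of $D$; then $D-f$ has size $k(k-1)-1$. By Theorem~\ref{teo11}, any complete $p$-coloring of $D-f$ satisfies $p(p-1)\le k(k-1)-1<k(k-1)$, forcing $p\le k-1$. Since $dac(D-f)\le \psi(D-f)$, this yields $dac(D-f)\le k-1<k=dac(D)$, so $D$ is $k$-minimal.

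The only part requiring any thought is the structural fact that the chromatic classes of every complete acyclic $k$-coloring of a $k$-minimal digraph are arc-less, and that is already dispatched in the discussion preceding the statement. Beyond that, one direction reduces to an elementary pigeonhole count on the $k(k-1)$ ordered pairs of colors and the other to a direct application of the size bound in Theorem~\ref{teo11}, so I do not anticipate any serious obstacle.
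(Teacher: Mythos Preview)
Your proposal is correct and follows essentially the same route as the paper: for $k$-minimal $\Rightarrow$ size $k(k-1)$ both you and the paper use the arc-less classes observation from the preamble together with a pigeonhole argument on the $k(k-1)$ ordered color pairs to find a redundant arc, and for the converse both arguments amount to noting that $D-f$ has size $k(k-1)-1$ and invoking the size lower bound $m\ge p(p-1)$ (which you cite as Theorem~\ref{teo11} and the paper uses implicitly). The only cosmetic difference is that you make the appeal to Theorem~\ref{teo11} explicit.
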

\begin{proof}
Assume first that the size of $D$ is $k(k-1)$. Then for every arc $f$ of $D$, the size of $D-f$ is $k(k-1)-1$. Since the size of $D-f$ is less than $k(k-1)$, it follows that $dac(D-f)<k=dac(D)$ and that $D$ is $k$-minimal.
We now verify the converse. Assume, to the contrary, that there is a $k$-minimal graph $H$ whose size $m$ is not $k(k-1)$. Since $dac(H)=k$, it follows that $m\geq k(k-1)$. Since $m\not=k(k-1)$, it follows that $m\geq k(k-1)+1$. Let $\phi$ be a complete $k$-coloring of $H$. For every two distinct colors $i,j\in[k]$, there exist an arc such that its vertices are colored $i$ and $j$. Since there are only $\binom{k}{2}$ pairs of two distinct colors from the set $[k]$ and two arc for each pair, there are two distinct arc $f=(u,v)$ and $f'=(u',v')$ such that $\phi(u)=\phi(u')$ and  $\phi(v)=\phi(v')$, then $\phi$ is also a complete $k$-coloring of $H-f$, contradicting the assumption that $H$ is $k$-minimal.
\end{proof}


\section{Tournaments}

A \emph{tournament} $T$ of order $n$ is an orientation of the complete graph $K_n$. An acyclic  tournament $T$ is \emph{transitive}, the vertex set $V(T)=\{v_1,\dots,v_n\}$ of a transitive tournament has a unique (acyclic) order $(v_1,\dots,v_n)$, where $N^-(v_1)=\emptyset$ and $N^-(v_i)=\{v_1,\ldots v_{i-1}\}$ for $1<i\le n$. 

Let $T=(V, A)$ be a tournament. A transitive subtournament  $T' = (V',  A')$ of  $T$ is \emph{discordant} if for every $x \in V\setminus V'$  there is a pair $\{z, w\} \subseteq V'$ such that $\{xz, wx\}\subseteq   A$.
Let  $\Xi_2(T)$ be the minimum order of a discordant subtournament of $T$.

\begin{lemma}\label{1} Let $T $ be a tournament of order $n\geq 3$. Then
$$\Xi_2(T)\leq 2log_2\Big(\frac{2n+2}{3}\Big).$$ \end{lemma}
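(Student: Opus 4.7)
The plan is to build a small discordant transitive subtournament $V'$ by selecting an arc via an averaging argument, and then greedily extending the resulting chain at its top and bottom.

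I would begin with an averaging step. For each arc $e=(u,v)$, set $A(e):=N^-(u)\cap N^-(v)$ and $B(e):=N^+(u)\cap N^+(v)$. A standard double-count gives $\sum_{e\in A(T)}|A(e)|=\sum_x\binom{d^+(x)}{2}$ and analogously for $B$, and in turn $\sum_x\binom{d^+(x)}{2}=\sum_x\binom{d^-(x)}{2}=\tau(T)$, where $\tau(T)$ is the number of transitive triples of $T$ (a transitive triple has a unique ``top'' vertex dominating the other two, contributing to $\binom{d^+}{2}$, and a unique ``bottom'' vertex dominated by them, contributing to $\binom{d^-}{2}$, while cyclic triples contribute to neither). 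Hence
\[
\sum_{e\in A(T)}\bigl(|A(e)|+|B(e)|\bigr)=2\tau(T)\le 2\binom{n}{3},
\]
and dividing by $\binom{n}{2}$ yields an arc $(u_0,v_0)$ with $|A_0|+|B_0|\le\tfrac{2(n-2)}{3}$, writing $A_0:=A(u_0,v_0)$ and $B_0:=B(u_0,v_0)$.

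I would then initialize $V'=\{u_0,v_0\}$ and iterate the following extension. Let $A$ (resp.\ $B$) be the outside vertices currently dominating (resp.\ dominated by) every element of $V'$. While $A\ne\emptyset$, pick $p\in A$ maximizing $|N^+(p)\cap A|$ (so $|N^+(p)\cap A|\ge\lceil(|A|-1)/2\rceil$) and insert $p$ as a new source of $V'$. Transitivity is preserved since $p$ beats all of $V'$; the new $A$ equals $N^-(p)\cap A$, of size at most $\lfloor(|A|-1)/2\rfloor$, and $B$ only contracts (to $\{x\in B:p\to x\}$). Symmetrically, halve $B$ by inserting, as a new sink of $V'$, a vertex $q\in B$ of maximum in-degree in $T[B]$. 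When $A=B=\emptyset$, every vertex outside $V'$ has both an in-neighbor and an out-neighbor in $V'$, so $V'$ is discordant.

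For the count, the recursion $a\mapsto\lfloor(a-1)/2\rfloor$ drives $|A_0|$ to $0$ in at most $\lfloor\log_2(|A_0|+1)\rfloor$ steps, and similarly for $|B_0|$, so combining with AM-GM and the averaging bound,
\[
|V'|\le 2+\log_2\bigl((|A_0|+1)(|B_0|+1)\bigr)\le 2+2\log_2\tfrac{|A_0|+|B_0|+2}{2}\le 2+2\log_2\tfrac{n+1}{3}=2\log_2\tfrac{2n+2}{3},
\]
which is the claimed bound. The main obstacle is calibrating the averaging tightly enough: the precise constant $\tfrac{2(n-2)}{3}$ coming from the transitive-triples identity is exactly what converts the naive estimate $2\log_2 n+O(1)$ into the claimed $2\log_2\tfrac{2n+2}{3}$, and it is crucial that we start from an edge rather than a single vertex.
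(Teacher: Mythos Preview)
Your proof is correct and follows essentially the same strategy as the paper's: select an initial arc by an averaging argument, then greedily halve the set of vertices dominating (respectively, dominated by) the current chain to extend it at the source (respectively, sink) end, and finish with the same AM--GM calculation. The only cosmetic difference is that you minimize $|A(e)|+|B(e)|$ directly, whereas the paper equivalently maximizes the complementary quantity $C_3(e)+TT^*(e)=n-2-|A(e)|-|B(e)|$; the resulting bound $|A_0|+|B_0|\le\tfrac{2(n-2)}{3}$ and the subsequent estimate are identical.
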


\begin{proof} For every $xy\in A$ let $C_3(xy)$   be the number of directed triangles of $T$ with $xy$ as
an arc, and $TT^*(xy)$ be the number of transitive triangles $C$ of $T$ such that $x$ is the source
and $y$ is the sink of $C$. Let $x_0y_0\in A$ such that $C_3(x_0y_0) + TT^*(x_0y_0)$ is maximum. 

\noindent {\bf Claim 1} $C_3(x_0y_0) + TT^*(x_0y_0) \geq \frac{n-2}{3}$. 

\noindent   Let $C_3(T)$ and $TT_3(T)$ be the number of directed triangles and transitive triangles in $T$, respectively. Observe that 
$$\sum\limits_{xy\in A}\left( C_3(xy) + TT^*(xy)\right) = 3C_3(T) + TT_3(T) = {{n}\choose{3}} + 2C_3(T),$$ 
by an average argument there exists $zw\in A$ such that $$C_3(zw) + TT^*(zw) \geq \frac{{{n}\choose{3}} + 2C_3(T)}{{{n}\choose{2}}}\geq \frac{n-2}{3} + \frac{2C_3(T)}{{{n}\choose{2}}}$$ and the claim follows.

Let $B^+ =\{ z\in V : \{x_0z, y_0z\} \subseteq A\}$ and $B^- =\{ z\in V : \{zx_0, zy_0\} \subseteq A\}$. By definition, it follows that $|B^+| + |B^-| = n-2 -C_3(x_0y_0) - TT^*(x_0y_0)$.

Let $B_0^+= B^+$ and let $z_0\in B^+_0$ be a vertex with maximum in-degree in $T[B^+_0]$. For each $i\geq 1$ let $B_i^+ = B_{i-1}^+ \setminus N^-[z_{i-1}]$ and $z_i\in B_i^+$ be a vertex with maximum in-degree in  $T[B^+_i]$. Let $j$ be the minimum integer such that $B_{j+1}^+= \emptyset$.  Observe that $j\leq log_2(|B^+|+1)-1$ and that $V' =\{z_j, z_{j-1}, \dots, z_0, x_0, y_0\}$ induces a transitive subtournament of $T$ such that for every $w\in V\setminus (B^-\cup V')$ there is a pair $\{x, y\}\subseteq V'$ such that $\{xw, wy\}\subseteq A$.

Let  $B^* = \{w\in B^- : \{wz_j, wz_{j-1}, \dots, wz_0, wx_0, wy_0\} \subseteq A\}$,  $B_0^-= B^*$ and $w_0\in B^-_0$ be a vertex with maximum out-degree in $T[B^-_0]$. As before, for each $i\geq 1$ let $B_i^- = B_{i-1}^- \setminus N^+[w_{i-1}]$ and $w_i\in B_i^-$ be a vertex with maximum out-degree in  $T[B^-_i]$. Let $q$ be the minimum integer such that $B_{q+1}^-= \emptyset$.  Observe that $q\leq log_2(|B^*|+1)-1 \leq log_2(|B^-|+1)-1$ and that $V' =\{z_j, z_{j-1}, \dots, z_0, x_0, y_0, w_0, w_1,\dots, w_q\}$ induces a discordant  subtournament of $T$  of order 
$$
\begin{array}{lcl}
2+ log_2(|B^+|+1) + log_2(|B^-|+1)   &  = & 2 + log_2\left((|B^+|+1)(|B^-|+1)\right)  \\
  &  = & log_2\left(4(|B^+|+1)(|B^-|+1)\right)  .
  \end{array}
$$
Since $|B^+| + |B^-|= n-2 -C_3(x_0y_0) - TT^*(x_0y_0)$  we see that  
$$
\begin{array}{lcl}
log_2\big(4(|B^+|+1)(|B^-|+1)\big)  &  \leq & log_2\left( 4\left(\dfrac{n-C_3(x_0y_0) - TT^*(x_0y_0)}{2}\right)^2\right)   \\
  &  = & 2log_2\left(n -C_3(x_0y_0) - TT^*(x_0y_0)\right). 
  \end{array}
$$

From here, and by Claim 1, the result follows.\end{proof}

\begin{theorem}\label{Torneo}  Let $T$ be a tournament of order $n\geq 3$.   Hence, $$\frac{n}{2log_2(\frac{2n+2}{3})} \leq dac(T)\leq \left\lceil\frac{n}{2}\right\rceil.$$
\end{theorem}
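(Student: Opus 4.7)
The upper bound $dac(T) \leq \lceil n/2 \rceil$ is immediate from Theorem \ref{upper3}, since every tournament is asymmetric. The substance of the theorem lies in the lower bound, and my plan is to build a complete acyclic coloring whose classes are the vertex sets of discordant transitive subtournaments obtained by repeated application of Lemma \ref{1}.

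Set $T_1 := T$ and, while $|T_i| \geq 3$, apply Lemma \ref{1} inside $T_i$ to obtain a discordant transitive subtournament with vertex set $V_i$ satisfying
\[
|V_i| \;\leq\; 2\log_2\!\left(\tfrac{2|T_i|+2}{3}\right) \;\leq\; 2\log_2\!\left(\tfrac{2n+2}{3}\right),
\]
and let $T_{i+1} := T_i - V_i$. Stop at the first index $k$ with $|T_{k+1}| < 3$ and designate $V_1,\dots,V_k$ as color classes, distributing the at most two leftover vertices either as a new singleton class or by inserting them into some $V_i$ at the appropriate position of its transitive order (which keeps that class transitive). Each $V_i$ then induces a transitive, hence acyclic, subtournament. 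For completeness between two classes $V_i$ and $V_j$ with $i<j$, observe that $V_j \subseteq V(T_i)\setminus V_i$, so the discordance of $V_i$ in $T_i$ supplies, for each $x \in V_j$, vertices $z,w \in V_i$ with $xz,\, wx \in A(T)$, providing arcs in both directions between $V_i$ and $V_j$. The same argument covers completeness with any leftover singleton class. Since each $|V_i|$ is bounded by $2\log_2(\tfrac{2n+2}{3})$, the number of color classes is at least $n/\bigl(2\log_2(\tfrac{2n+2}{3})\bigr)$, yielding the lower bound.

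The main obstacle I foresee is the bookkeeping for the residual vertices when the recursion stops at fewer than three vertices: two leftover singleton classes cannot both stand alone because a tournament contains only one arc between any two vertices, so at most one of them may be left as its own class. I will therefore need to verify that redistributing the other leftover vertex into some $V_i$ (at the right position of its transitive order) preserves both acyclicity and completeness, and that the resulting class count still meets the claimed bound; this is routine but must be checked carefully, especially for small $n$, where $2\log_2(\tfrac{2n+2}{3})$ is comparable to $n$.
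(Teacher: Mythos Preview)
Your approach is essentially the paper's own: peel off discordant transitive subtournaments one at a time via Lemma~\ref{1}, use these as color classes, argue completeness from discordance and acyclicity from transitivity, and count classes by the uniform size bound $2\log_2\!\left(\tfrac{2n+2}{3}\right)$. The upper bound is handled identically.

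The one place where you are making life harder than necessary is the endgame. When the recursion stops with at most two leftover vertices, you do \emph{not} need to insert any of them into an earlier $V_i$; simply declare the entire residue $V_{k+1}=V(T_{k+1})$ to be one final class. A class of size at most $2$ is automatically acyclic, and completeness between $V_{k+1}$ and each earlier $V_i$ follows exactly as you argued, since every vertex of $V_{k+1}$ lies in $T_i\setminus V_i$ and $V_i$ is discordant there. Because $2\le 2\log_2\!\left(\tfrac{2n+2}{3}\right)$ for $n\ge 3$, this last class also obeys the uniform size bound, so the counting inequality goes through without any special case. By contrast, your proposed fix of inserting a leftover vertex $x$ into some $V_i$ is not guaranteed to work: there is no reason $T[V_i\cup\{x\}]$ must be transitive, so that step could fail. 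Dropping it removes the only real gap in your argument.
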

\begin{proof}  Given any complete vertex coloring $\Gamma$ of $T$, it follows that there is no pair of colors $i,j$ such that $|\Gamma^{-1}(i)| = |\Gamma^{-1}(j)| =1$, otherwise either there is no $\Gamma(i)\Gamma(j)$-arc or there is no $\Gamma(j)\Gamma(i)$-arc. Thus, except for at most one color, every color appear at least twice in $V$. Therefore, $dac(T)\leq \lceil\frac{n}{2}\rceil$. For the lower bound, let $\{V_1, \dots, V_k\}$ be a partition of $V$ such that, for every $i\in[ k-1]$, $T[V_i]$ is a discordant subtournament of $T[ V\setminus \bigcup\limits_{j\in[i-1]} V_j]$, and $k$ is maximum. Let $\Gamma: V\rightarrow [k]$ be the vertex coloring of $T$ such that  $\Gamma(x) = j$ if and only if $x\in V_j$.  On the one hand, for every pair $i,j\in[k]$, if $i<j$, since $V_i$ is a discordant subtournament of $T[ V\setminus \bigcup\limits_{j\in[i-1]} V_j]$, it follows that in $T$  there are $V_iV_j$-arcs and $V_jV_i$-arcs, and therefore $\Gamma$ is a complete coloring of $T$.  On the other hand, 
by Lemma \ref{1},  there is a partition  $\{V'_1, \dots, V'_{k'}\}$  of $V$  such that  for every $i\in[ k'-1]$, $T[V'_i]$ is a discordant subtournament of $T[ V\setminus \bigcup\limits_{j=1}^{i-1} V'_j]$, with 
$|V'_i|\leq  2log_2\left(\frac{2\left(n- \sum\limits_{j\in[i-1]} |V'_j|\right)+2}{3}\right) \leq 2log_2\left(\frac{2n+2}{3}\right)$, 
and $|V'_{k'}|\leq 2$.  Since $2 \leq 2log_2\left(\frac{2n+2}{3}\right)$,  
$$dac(T)\geq k\geq k' \geq \frac{n}{2log_2(\frac{2n+2}{3})}$$ 
and the result follows. \end{proof}

Let $\mathbb{Z}_{2m+1}$ be the cyclic group of integers modulo $2m+1$ $(m\geq 1)$ and $J$ a nonempty subset of $\mathbb{Z}_{2m+1}\setminus \{0\}$ such that $\left\vert \{-j,j\}\cap J\right\vert =1$ for every $j\in J$. The \textit{circulant tournament} $\overrightarrow{C}_{2m+1}(J)$ has vertex-set $V(\overrightarrow{C}_{2m+1}(J))=\mathbb{Z}_{2m+1}$ and arc-set $A(\overrightarrow{C}_{2m+1}(J))=\left\{ (i,j):i,j\in \mathbb{Z}_{2m+1}\text{ and }j-i\in J\right\} $.

\begin{corollary}\label{trans}
Let $T$ be a circulant tournament or a transitive tournament of order $n$. Then $$dac(T)=\psi(T)=
\left\lceil {n}/{2}\right\rceil.$$
\end{corollary}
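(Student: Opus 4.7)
The plan is to combine the upper bound $dac(T)\le\lceil n/2\rceil$ from Theorem~\ref{Torneo} with an explicit complete acyclic coloring achieving this value in each case. The same singleton argument used in Theorem~\ref{Torneo} actually bounds the pseudoachromatic number as well: if two chromatic classes $\{u\},\{v\}$ were both singletons, then the single arc between $u$ and $v$ in the tournament would block completeness in one of the two directions. Hence $\psi(T)\le\lceil n/2\rceil$, and by (\ref{eq1}) we obtain $dc(T)\le dac(T)\le\psi(T)\le\lceil n/2\rceil$. It then suffices to produce a complete acyclic coloring with $\lceil n/2\rceil$ colors in each case.

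For a transitive tournament with canonical order $v_1,\dots,v_n$, I would take chromatic classes $C_i=\{v_i,v_{n+1-i}\}$ for $1\le i\le\lfloor n/2\rfloor$, together with the singleton $\{v_{(n+1)/2}\}$ when $n$ is odd. This uses $\lceil n/2\rceil$ colors. Completeness is immediate from the linear order: for $a<b$ among paired classes, the arcs $v_av_b$ and $v_{n+1-b}v_{n+1-a}$ realize $C_a\to C_b$ and $C_b\to C_a$ respectively; if $n$ is odd, the middle vertex $v_{(n+1)/2}$ both dominates $v_i$ for $i\le\lfloor n/2\rfloor$ and is dominated by $v_{n+1-i}$, covering the singleton-to-pair directions.

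For a circulant tournament $\overrightarrow{C}_{2m+1}(J)$ on $\mathbb{Z}_{2m+1}$, I would colour by the singleton $C_0=\{0\}$ together with the pairs $C_j=\{j,-j\}$ for $j=1,\dots,m$, giving $m+1=\lceil n/2\rceil$ classes. The verification uses the defining condition $|\{-j,j\}\cap J|=1$ twice. For $C_0$ versus $C_j$: whichever element of $\{j,-j\}$ lies in $J$ gives one arc out of $0$ into $C_j$ and simultaneously one arc from $C_j$ into $0$, since $0$ is fixed by negation on the arc differences. For pairs $C_a$ versus $C_b$ with $1\le a<b\le m$: inspect the pair $\{a+b,-(a+b)\}$, whose sum is nonzero mod $2m+1$ and hence belongs to exactly one of $J$ or $-J$. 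If $a+b\in J$, both $-a\to b$ and $-b\to a$ are arcs, yielding $C_a\to C_b$ and $C_b\to C_a$; if $-(a+b)\in J$, the arcs $a\to -b$ and $b\to -a$ work symmetrically. In either situation a single choice of sign delivers both inter-class directions.

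Acyclicity is trivial in both constructions, because every chromatic class has at most two vertices and a tournament contains at most one arc between any pair of vertices. The only real obstacle lies in the circulant case: one must pick a pairing that is compatible with every admissible $J$, and the key observation is that the pair $\{j,-j\}$ succeeds uniformly because the sum $a+b$ (rather than the difference $b-a$) already controls both arcs needed between $C_a$ and $C_b$.
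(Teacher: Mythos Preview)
Your proof is correct and uses exactly the same colorings as the paper: the pairing $\{v_i,v_{n+1-i}\}$ in the transitive case and $\{j,-j\}$ in the circulant case, with the upper bound coming from the singleton argument of Theorem~\ref{Torneo} (which, as you note, actually bounds $\psi$ and not merely $dac$). You supply considerably more verification of completeness than the paper, which simply asserts it.

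One small slip that does not affect the argument: in the transitive odd case, with the convention $N^-(v_i)=\{v_1,\dots,v_{i-1}\}$ from the paper, the middle vertex $v_{(n+1)/2}$ is \emph{dominated by} $v_i$ for $i\le\lfloor n/2\rfloor$ and \emph{dominates} $v_{n+1-i}$, the reverse of what you wrote. Since both $v_i$ and $v_{n+1-i}$ lie in $C_i$, the two required arcs between the singleton and $C_i$ are still present, so completeness holds regardless.
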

\begin{proof}
Let $\overrightarrow{C}_{n}(J)$ be a circulant tournament, with $n=2m+1$, the coloring $\varsigma(0)=0$ and $\varsigma(i)=\varsigma({2m+1-i})=i$ for $i\in [m]$ defines a complete $m+1$-coloring of $\overrightarrow{C}_{2m+1}(J)$ and $m+1=\left\lceil {2m+1}/{2}\right\rceil=\left\lceil {n}/{2}\right\rceil$. For a  transitive tournament $T$ of order $n$ with the acyclic order of its vertex set  $(v_1,v_2,\dots,v_{n})$, the coloring $\varsigma(v_i)=\varsigma(v_{n+1-i})=i$ for $i\in \{1,\dots,\left\lceil \frac{n}{2}\right\rceil\}$ defines a complete $\left\lceil \frac{n}{2}\right\rceil$-coloring of $T$. The upper bound in Theorem \ref{Torneo} completes the proof. 
\end{proof}


\begin{corollary}
If $T$ is a tournament of order $n$, then ${n}/{2}\leq dc(T)dac(T)$.
\end{corollary}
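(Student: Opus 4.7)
The plan is to exploit the interplay between an optimal acyclic coloring of $T$ and the transitive subtournaments it produces, then feed one such transitive subtournament into Corollary \ref{trans}.

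First, fix an optimal acyclic coloring of $T$ with $dc(T) = k$ color classes $V_1, \dots, V_k$. By the pigeonhole principle, at least one class, say $V_1$, satisfies $|V_1| \geq n/k = n/dc(T)$. Since the coloring is acyclic, the induced subdigraph $T[V_1]$ has no directed cycles; as it is a subtournament of $T$, it must be a transitive tournament of order $|V_1|$.

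Next, I would apply Corollary \ref{trans} to the transitive tournament $T[V_1]$ to obtain
$$dac(T[V_1]) = \left\lceil \frac{|V_1|}{2} \right\rceil \geq \frac{|V_1|}{2} \geq \frac{n}{2\, dc(T)}.$$
Then Corollary \ref{cor1}, which guarantees that the diachromatic number is monotone under induced subdigraphs, gives $dac(T) \geq dac(T[V_1]) \geq \frac{n}{2\, dc(T)}$, and rearranging yields $dc(T)\, dac(T) \geq n/2$, as required.

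There is essentially no obstacle here: the argument is a one-line pigeonhole followed by two applications of previously established results. The only subtle point worth making explicit is that an acyclic subtournament is transitive, which is why Corollary \ref{trans} is available for $T[V_1]$.
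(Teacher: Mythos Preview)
Your argument is correct and matches the paper's proof essentially line for line: take a $dc(T)$-coloring, pick the largest chromatic class (pigeonhole gives $|V_1|\geq n/dc(T)$), note it induces a transitive tournament, and combine Corollaries~\ref{trans} and~\ref{cor1} to obtain $dac(T)\geq |V_1|/2\geq n/(2\,dc(T))$.
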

\begin{proof}
Let $\varsigma$ be a $dc(T)$-coloring and $x$ the order of the largest chromatic class $i$, therefore, $n\leq xdc(T)$. By Corollaries \ref{cor1} and \ref{trans}, we obtain $\frac{x}{2}\leq dac(T)$ because the subdigraph induced by $\varsigma^{-1}(i)$ is a transitive tournament. 
\end{proof}

Recall that a digraph is \emph{strongly connected} if for every pair of vertices $u$ and $v$ there exist a directed $u-v$ walk.  The \emph{strongly components} of a digraph $D$ form a partition $\sim$ into subdigraphs that are themselves strongly connected. Such partition induces the digraph $\tilde{D}:=D/\sim$.
\begin{theorem}\label{teo strong comp}\cite{MR2107429}
If $T$ is a tournament with (exactly) $k$ strong components, then $\tilde{T}$ is the transitive tournament of order $k$.
\end{theorem}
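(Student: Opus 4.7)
The plan is to verify two properties of $\tilde T$: first that it is a tournament on $k$ vertices, and second that it is acyclic; together these force it to be the (unique) transitive tournament of order $k$.

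To see that $\tilde T$ is a tournament, I would first show that between any two distinct strong components $S_i,S_j$ all arcs of $T$ with one endpoint in each point in a common direction. Indeed, if there were arcs $uv$ with $u\in S_i,\,v\in S_j$ and $u'v'$ with $u'\in S_j,\,v'\in S_i$, then concatenating $uv$ with a directed $v$-$u'$ path inside $S_j$, then $u'v'$, and then a directed $v'$-$u$ path inside $S_i$, would produce a directed cycle of $T$ containing vertices of both $S_i$ and $S_j$; this contradicts the maximality of the strong components (it would mean $S_i\cup S_j$ lies in a single strong component). Since $T$ is a tournament, at least one arc between $S_i$ and $S_j$ exists, and by the previous paragraph exactly one direction is realized, giving exactly one arc of $\tilde T$ between the vertices corresponding to $S_i$ and $S_j$.

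Next I would show $\tilde T$ is acyclic. A directed cycle $S_{i_1}\to S_{i_2}\to\cdots\to S_{i_r}\to S_{i_1}$ in $\tilde T$ can be lifted to a directed cycle of $T$ by picking, for each transition $S_{i_t}\to S_{i_{t+1}}$, an arc $u_tv_{t+1}$ witnessing it, and joining $v_{t+1}$ to $u_{t+1}$ by a directed path inside $S_{i_{t+1}}$ (which exists because each $S_{i_{t+1}}$ is strongly connected). This closed directed walk contains a directed cycle visiting vertices of several components, again contradicting the maximality of the strong components.

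Thus $\tilde T$ is an acyclic tournament on $k$ vertices, and an easy induction finishes: an acyclic tournament has a vertex $v$ of in-degree $0$ (else iterating in-neighbors would give a cycle), removing it yields an acyclic tournament on $k-1$ vertices that is transitive by induction, and $v$ then serves as the source of the transitive order on $k$ vertices. The only delicate point in the whole argument is being careful with the two lifting arguments based on strong-component maximality; once those are stated cleanly, the rest is routine.
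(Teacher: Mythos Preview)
Your argument is correct and is the standard proof of this well-known fact. Note, however, that the paper does not supply its own proof of this theorem: it is quoted from \cite{MR2107429} and stated without proof, so there is nothing to compare your approach against. Your write-up would serve perfectly well as a self-contained justification; the only cosmetic suggestion is that in the first lifting argument you may want to handle the degenerate case $|S_i|=1$ or $|S_j|=1$ explicitly (where the internal path is trivial), though this is routine.
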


\begin{corollary}
If $T$ is a  tournament   with (exactly) $k$ strong components, then $\left\lceil {k}/{2}\right\rceil\leq dac(T)$. 
\end{corollary}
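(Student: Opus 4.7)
The plan is to exhibit an induced subdigraph of $T$ whose diachromatic number we already know, and apply Corollary \ref{cor1}. The natural candidate is a transitive tournament of order $k$, obtained by selecting a single representative from each strong component.

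First I would invoke Theorem \ref{teo strong comp} to see that $\tilde{T}$ is a transitive tournament of order $k$, so its strong components admit an acyclic ordering $S_1, S_2, \dots, S_k$. Since $\tilde{T}$ is acyclic, the crucial observation is that for $i<j$, every arc of $T$ between a vertex of $S_i$ and a vertex of $S_j$ must be oriented from $S_i$ to $S_j$: any arc pointing the other way would, combined with the strong connectivity of $S_i$ and $S_j$, force $S_i$ and $S_j$ to lie in a common strong component, contradicting the structure of $\tilde{T}$.

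Next I would choose arbitrary representatives $u_i\in S_i$ for each $i\in[k]$, and consider the induced subtournament $T':=T[\{u_1,\dots,u_k\}]$. By the previous observation, $(u_i,u_j)\in A(T)$ whenever $i<j$, so $T'$ is precisely the transitive tournament of order $k$. Corollary \ref{trans} then gives $dac(T')=\lceil k/2\rceil$, and Corollary \ref{cor1} applied to the induced subdigraph $T'\subseteq T$ yields
\[
dac(T)\ge dac(T')=\left\lceil \frac{k}{2}\right\rceil,
\]
as required.

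There is no real obstacle here: the argument reduces entirely to identifying a transitive $k$-vertex induced subtournament and then invoking the previously established results. The only step that needs a moment of care is justifying that the representatives truly form a transitive tournament, which follows immediately from the transitivity of the condensation.
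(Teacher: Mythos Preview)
Your proof is correct and is precisely the argument the paper intends: the corollary is stated without proof immediately after Theorem~\ref{teo strong comp}, and the only way it follows from the preceding results is by choosing representatives of the strong components to obtain an induced transitive tournament of order $k$, then combining Corollary~\ref{trans} with Corollary~\ref{cor1}. Your justification that the representatives form a transitive tournament (all arcs between distinct components go one way, else the components would merge) is the right detail to spell out.
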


\begin{proposition}\label{teo3}
Every digraph $D$ that admits two vertex disjoint transitive tournaments $T_r$ of order $r$ and $T_s$ of order $s$ has $dac(D)\geq \min\{r,s\}+\lfloor\frac{s-r}{2}\rfloor$.
\end{proposition}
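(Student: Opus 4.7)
The plan is to exhibit a complete acyclic coloring of $D$ with $k:=\min\{r,s\}+\lfloor(s-r)/2\rfloor$ colors, from which $dac(D)\ge k$ is immediate. Without loss of generality assume $r\le s$ and write $t:=\lfloor(s-r)/2\rfloor$, so $k=r+t$. Let $u_1,\dots,u_r$ and $v_1,\dots,v_s$ be the acyclic orders of $T_r$ and $T_s$, so that $(u_j,u_i)\in A(T_r)$ and $(v_j,v_i)\in A(T_s)$ whenever $j<i$. The construction mimics the pair-up coloring behind Corollary \ref{trans}: first pair the outer vertices of $T_s$ by setting $\varsigma(v_i)=\varsigma(v_{s+1-i})=i$ for $i\in[t]$; then match each $u_j$ with a middle vertex of $T_s$ \emph{in reversed order}, putting $\varsigma(u_j)=\varsigma(v_{t+r+1-j})=t+j$ for $j\in[r]$ when $s-r$ is even, and $\varsigma(u_j)=\varsigma(v_{t+r+2-j})=t+j$ for $j\in[r]$ when $s-r$ is odd while absorbing the leftover middle vertex $v_{t+1}$ into the class of color $t+1$. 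Any remaining vertices of $D$ are colored by the greedy procedure preceding Theorem \ref{teo6}, which preserves completeness and acyclicity and can only add further colors.

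Completeness is verified by case analysis on an ordered pair of distinct colors. For $a<b$ both in $[t]$ the arcs $v_a\to v_b$ and $v_{s+1-b}\to v_{s+1-a}$ of $T_s$ work. For a mixed pair $a\in[t]$ and $b=t+j$, the arcs $v_a\to v_{t+r+1-j}$ and $v_{t+r+1-j}\to v_{s+1-a}$ of $T_s$ do the job (with the analogous substitution $t+r+2-j$ in the odd case). For a pair $t+j_1<t+j_2$ of middle colors, the arc $u_{j_1}\to u_{j_2}$ of $T_r$ supplies one direction while $v_{t+r+1-j_2}\to v_{t+r+1-j_1}$ of $T_s$ supplies the other. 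Acyclicity is immediate: every class contains at most two (or three, in the absorbed case) vertices of $T_s$ that are linearly ordered by the acyclic order of $T_s$, together with at most one vertex of $T_r$ that is unrelated to $T_s$ by any arc of $T_r\cup T_s$, so the induced subdigraph on each class carries no directed cycle.

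The main obstacle is the index arithmetic: one must verify that every chosen $T_s$-partner $v_{t+r+1-j}$ (or $v_{t+r+2-j}$) lies in the ``middle'' range $\{v_{t+1},\dots,v_{s-t}\}$ and is distinct from every pair-up partner, so that the arcs invoked in the completeness verification truly exist in $T_s$; and in the odd-parity case one must additionally check that inserting $v_{t+1}$ into the class of color $t+1$ preserves both acyclicity and completeness, which reduces to noting the $T_s$-arcs $v_{t+1}\to v_{t+r+1}$ and $v_{t+1}\to v_{s+1-a}$ for every $a\in[t]$.
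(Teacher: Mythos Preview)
Your approach is essentially identical to the paper's: pair each vertex of $T_r$ with a middle vertex of $T_s$ in reversed order, pair up the outer vertices of $T_s$, and greedily extend; the only difference is a relabeling of the color classes (the paper assigns colors $1,\dots,r$ to the $T_r$--middle-$T_s$ pairs and $r+1,\dots,r+k$ to the outer $T_s$-pairs, whereas you use $t+1,\dots,t+r$ and $1,\dots,t$ respectively). Your treatment of the odd-parity leftover vertex and your completeness verification are more explicit than the paper's, but the construction and the argument are the same.
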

\begin{proof}
Let $V(T_r)=\{x_1,\dots,x_r\}$ and $V(T_s)=\{y_1,\dots,y_s\}$. Suppose that $r\leq s$ 
and let $k=\lfloor\frac{s-r}{2}\rfloor$. 
Color the vertices $x_i$ and $y_{k+r+1-i}$ with $i$ if $i\in\{1,\dots,r\}$, color the vertices $\{y_{k+i},y_{k+r+1-i}\}$ with $i$ if $i\in\{r+1,\dots,r+k\}$ 
and color the remaining vertices using a greedy coloration. The coloring is complete since for the pair of distinct colors $(i,j)$, with $i,j\le r$,  there are at least an arc $(x_i,x_j)$ or $(y_i,y_j)$ such that $x_i$ or $y_i$ has the color $i$ and $x_j$ or $y_j$ has the color $j$; 
and for each integer $j$, with $r<j\le r+k$ and $i<j$ there are at least the arcs $(y_{i'},y_{k+j})$ and $(y_{k+r+1-j},y_{i'})$ such that $y_{i'}$ has the color $i$ and the vertices $y_{k+j}$ and $y_{k+r+1-j}$ have the color $j$.
\end{proof}

\begin{theorem}
If $T$ is a tournament of order $n$, then $\sqrt{n}-1/2\leq dac(T)$.
\end{theorem}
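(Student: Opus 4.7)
The plan is to apply Proposition \ref{teo3} to the two largest chromatic classes of a $dc(T)$-coloring of $T$ and then close a quadratic inequality in $dac(T)$ using the basic bound $dc(T)\le dac(T)$ from (\ref{eq1}). The improvement over the earlier corollary $n/2\le dc(T)\,dac(T)$ will come from exploiting two transitive classes instead of just one via Proposition \ref{teo3}.

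I would begin by fixing a $dc(T)$-coloring of $T$ with chromatic classes $C_{1},\dots,C_{p}$, where $p=dc(T)$. Each class induces an acyclic subdigraph, and since every induced subdigraph of a tournament is a tournament and any acyclic tournament is transitive, every $T[C_{i}]$ is a transitive subtournament. Label them so that $s_{1}:=|C_{1}|\ge s_{2}:=|C_{2}|\ge\dots\ge|C_{p}|$. If $p=1$, then $T$ is itself transitive and Corollary \ref{trans} gives $dac(T)=\lceil n/2\rceil$, so the bound $dac(T)\ge\sqrt{n}-1/2$ reduces to $(\sqrt{n}-1)^{2}\ge 0$. For $p\ge 2$, Proposition \ref{teo3} applied to the disjoint transitive subtournaments $T[C_{1}]$ and $T[C_{2}]$ yields $dac(T)\ge\lfloor(s_{1}+s_{2})/2\rfloor\ge(s_{1}+s_{2})/2-1/2$.

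The key auxiliary estimate I need is $s_{1}+s_{2}\ge 2n/p$. I would deduce this from $s_{1}\ge n/p$ (since $s_{1}$ is the largest among $p$ positive integers summing to $n$) together with $s_{2}\ge(n-s_{1})/(p-1)$ (since $s_{2}$ bounds the average of $s_{2},\dots,s_{p}$), after a short computation that separates the trivial case $p=2$ (where $s_{1}+s_{2}=n=2n/p$) from $p\ge 3$. Plugging this back gives $dac(T)\ge n/p-1/2$, and using $p=dc(T)\le dac(T)$ from (\ref{eq1}) converts this into the quadratic inequality $dac(T)^{2}+dac(T)/2\ge n$. Since $(dac(T)+1/2)^{2}=dac(T)^{2}+dac(T)+1/4\ge dac(T)^{2}+dac(T)/2$, we conclude $(dac(T)+1/2)^{2}\ge n$, i.e.\ $dac(T)\ge\sqrt{n}-1/2$.

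The only nonroutine ingredient is the small counting lemma $s_{1}+s_{2}\ge 2n/p$; everything else is a direct appeal to previously established results (Proposition \ref{teo3}, Corollary \ref{trans}, inequality (\ref{eq1})) together with elementary algebra.
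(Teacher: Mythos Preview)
Your argument is correct, but it differs from the paper's in the choice of coloring and in how Proposition~\ref{teo3} is exploited. The paper starts instead from a $dac(T)$-coloring: if $x$ and $y$ are the sizes of the two largest classes, Corollary~\ref{trans} (via Corollary~\ref{cor1}) gives $x\le 2\,dac(T)$, while Proposition~\ref{teo3} is invoked only through its weaker consequence $dac(T)\ge y$; combined with $y\ge (n-x)/(dac(T)-1)$ this yields $dac(T)^2+dac(T)\ge n$, hence the slightly sharper $dac(T)\ge(\sqrt{4n+1}-1)/2$. Your route replaces the appeal to Corollary~\ref{trans} by the full strength of Proposition~\ref{teo3} (namely $dac(T)\ge\lfloor(s_1+s_2)/2\rfloor$) together with the elementary averaging lemma $s_1+s_2\ge 2n/p$ and the bound $p=dc(T)\le dac(T)$ from~(\ref{eq1}); this is arguably more self-contained but loses a tiny bit in the final quadratic, giving $dac(T)^2+dac(T)/2\ge n$ rather than $dac(T)^2+dac(T)\ge n$. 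Both inequalities of course suffice for the stated theorem.
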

\begin{proof}
Let $\varsigma$ be a $dac(T)$-coloring and let $x$ be the order of the largest chromatic class $i$. Since the subdigraph induced by $\varsigma^{-1}(i)$ is a transitive tournament, by Corollary  \ref{trans} we obtain $\frac{x}{2}\leq dac(T)$, and $-x\geq -2dac(T)$.
By Proposition \ref{teo3}, $dac(T)\geq y$ where $y$ is the order of the second largest chromatic class $j$. Since $y\geq \frac{n-x}{dac(T)-1}$, we obtain \[dac(T)(dac(T)-1)\geq n-x\geq n-2dac(T).\]
Finally, we solve $dac(T)^{2}+dac(T)-n\geq0$ obtaining $dac(T)\geq \frac{\sqrt{1+4n}-1}{2}$ and the result follows.
\end{proof}

Let $F$ be a digraph. The digraph $D$ is \emph{$F$-free} if $D$ has no subdigraph isomorphic to $F$.  
A tournament $F$ is a \emph{hero} if and only if there exists $c > 0$ such that every $F$-free tournament $D$ has a transitive subset of cardinality at least $c|V(D)|$, see \cite{MR2995716}. Let us say a tournament $F$ is a \emph{hero}, if there exists $c'$ (depending on $F$) such that every $F$-free tournament has dichromatic number at most $c'$.
\begin{corollary}
If $T$ is a hero then $dac(T)\in \theta(n)$.
\end{corollary}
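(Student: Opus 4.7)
The plan is to chain together the hero hypothesis, a pigeonhole argument on chromatic classes, and the known evaluation of the diachromatic number on transitive tournaments. The point is that in any tournament, every chromatic class of an acyclic coloring must induce a transitive subtournament (an acyclic subtournament is transitive), so bounding $dc(T)$ above forces a large transitive sub-object, on which we already know $dac$ exactly.

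First, I would unpack the hypothesis: reading ``$T$ is a hero'' as asserting that $T$ belongs to the class of $F$-free tournaments for some fixed hero $F$, the definition of hero gives a constant $c'=c'(F)$ with $dc(T)\leq c'$. Fix an optimal acyclic coloring of $T$ with exactly $dc(T)\leq c'$ chromatic classes. By pigeonhole, some class contains at least $n/c'$ vertices, and since the subdigraph induced by that class is an acyclic subtournament, it is in fact a transitive tournament $T'$ of order $|V(T')|\geq n/c'$.

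Next, I would invoke the monotonicity of $dac$ under induced subdigraphs (Corollary \ref{cor1}) together with the exact value for transitive tournaments (Corollary \ref{trans}) to conclude
\[
dac(T)\;\geq\; dac(T')\;=\;\left\lceil \frac{|V(T')|}{2}\right\rceil\;\geq\;\frac{n}{2c'},
\]
so $dac(T)\in\Omega(n)$. The matching upper bound $dac(T)\leq \lceil n/2\rceil$ is already furnished by Theorem \ref{Torneo}, and together these give $dac(T)\in \theta(n)$.

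I do not anticipate any real technical obstacle: every ingredient is in place from the preceding results. The only thing that requires care is the interpretation of the hero hypothesis (and, correspondingly, identifying that an acyclic chromatic class inside a tournament is automatically a transitive subtournament, so that Corollary \ref{trans} is applicable). With that observation, the corollary is a two-line consequence of pigeonhole plus Corollaries \ref{cor1} and \ref{trans}, bounded above by Theorem \ref{Torneo}.
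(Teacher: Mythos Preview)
Your argument is correct, and since the paper states this corollary without proof, there is nothing to compare against beyond the implicit reasoning. Your reading of the (admittedly sloppy) hypothesis is the only sensible one, and the chain Corollary~\ref{cor1} $+$ Corollary~\ref{trans} $+$ Theorem~\ref{Torneo} is exactly what the paper has set up for use here.

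One minor remark: the paper actually gives \emph{two} characterizations of hero just before the corollary---bounded dichromatic number (which you use) and the existence of a linear-size transitive subset. Using the latter directly hands you a transitive subtournament of order $\geq cn$ without the pigeonhole step on chromatic classes, so the argument becomes a one-liner: $dac(T)\geq dac(T')=\lceil cn/2\rceil$. Your route via $dc(T)\leq c'$ is equivalent (indeed, the equivalence of the two characterizations is precisely the content of the cited hero theorem), just one step longer.
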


In the following sections, we generalize some results on graphs, given in Chapter 12  of \cite{MR2450569}, for digraphs; in Section \ref{section4}, we define the concept of dihomorphisms and show that the interpolation property does hold for complete acyclic colorings; and in Section \ref{section5}, we establish the Nordhaus-Gaddum relations. 


\section{The interpolation theorem}\label{section4}
Recall that two vertices are adjacent if they are the vertices of a $2$-cycle. An \emph{elementary dihomomorphism} of a digraph $D$ is obtained by identifying two nonadjacent vertices $u$ and $v$ of $D$. The vertex obtained by identifying $u$ and $v$ may be denoted by either $u$ or $v$. Thus the resulting dihomomorphic image $D'$ can be considered to have vertex set $V(D)\setminus \{u\}$ and arc set \[
\begin{array}{lll}
A(D') & = & \{(x,y):(x,y)\in A(D),x,y\in V(D)\setminus\{u\}\} \\[1ex]
&  &\cup ~ \{(v,x):(u,x)\in A(D),x\in V(D)\setminus\{u,v\}\}\\[1ex]
&  &\cup ~ \{(x,v):(x,u)\in A(D),x\in V(D)\setminus\{u,v\}\}.\\[1ex]
\end{array}
\]
Alternatively, the mapping $\epsilon \colon V(D)\rightarrow V(D')$ defined by \[
\epsilon(x) =
\begin{cases}
x & \text{if  } x\in V(D)\setminus\{u,v\};\\
v & \text{if } x\in \{u,v\};
\end{cases}
\]
is an elementary dihomomorphism from $D$ to $D'$. The dihomomorphic image $\epsilon(D)$ of a digraph $D$ obtained from an elementary dihomomorphism $\epsilon$ is also referred to as an elementary dihomomorphic image. Not only is $D'$ a dihomomorphic image of $D$, a digraph $F$ is a dihomomorphic image of a digraph $D$ if and only if $F$ can be obtained by a sequence of elementary dihomomorphisms beginning with $D$.

The fact that each dihomomorphic image of a digraph $D$ can be obtained from $D$ by a sequence of elementary dihomomorphisms tells us that we can obtain each dihomomorphic image of $D$ by an appropriate partition \[P=\{V_1,V_2,\dots,V_k\}\] of $V(D)$ into acyclic sets such that $V(F)=\{v_1,v_2,\dots,v_k\}$, where $v_i$ is adjacent to $v_j$ if and only if some vertices $u$ and $u'$ in $V_i$ and some vertices $v$ and $v'$ in $V_j$ are arcs $(u,v)$ and $(v',u)$ of $D$. The partition $P$ of $V(D)$ then corresponds to the coloring $\varsigma$ of $D$ in which each vertex in $V_i$ is assigned the color $i$ $(1\leq i \leq k)$. In particular, if the coloring $\varsigma$ is a complete $k$-coloring, then $F$ is the complete symmetric digraph of order $k$.

Therefore, if a digraph $F$ is a dihomomorphic image of a digraph $D$, then there is a dihomomorphism $\phi$ from $D$ to $F$ and for each vertex $v$ in $F$, the set $\phi^{-1}(v)$ of those vertices of $D$ having $v$ as their image is acyclic in $D$. Consequently, each coloring of $F$ gives rise to a coloring of $D$ by assigning to each vertex of $D$ in $\phi^{-1}(v)$ the color that is assigned to $v$ in $F$. For this reason, the digraph $D$ is said to be $F$-colorable. This provide us the following remark: 

\begin{remark}\label{F}
If $F$ is a dihomomorphic image of a digraph $D$, then \[dc(D)\leq dc(F).\]
\end{remark}

\begin{theorem}\label{teo1}
Let $\epsilon$ be an elementary dihomomorphism of a digraph $D$, then \[dc(D)\leq dc(\epsilon(D))\leq dc(D)+1.\]
Moreover, $dc(\epsilon(D))=dc(D)$ if and only if there exists a $dc(D)$-coloring of $D$ in which the identified vertex share the color.
\end{theorem}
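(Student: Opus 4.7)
The overall plan is to establish the two inequalities and then the biconditional. The lower bound $dc(D) \le dc(\epsilon(D))$ is immediate from Remark \ref{F}, since $\epsilon(D)$ is by construction a dihomomorphic image of $D$. For the upper bound $dc(\epsilon(D)) \le dc(D) + 1$, I would take an optimal acyclic coloring $\varsigma$ of $D$ with $k := dc(D)$ colors and transport it to $\epsilon(D)$ by setting $\varsigma'(x) := \varsigma(x)$ for $x \in V(D) \setminus \{u, v\}$ and assigning a fresh color $k+1$ to the identified vertex. The singleton class $\{v\}$ is trivially acyclic and every other color class in $\epsilon(D)$ sits inside the corresponding color class in $D$ (with $u$ and $v$ removed), hence induces an acyclic subdigraph; thus $\varsigma'$ is an acyclic $(k+1)$-coloring of $\epsilon(D)$.

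For the forward direction of the moreover ($\Rightarrow$), assume $dc(\epsilon(D)) = k = dc(D)$ and let $\varsigma'$ be a $k$-coloring of $\epsilon(D)$. Pulling back by $\varsigma := \varsigma' \circ \epsilon$ yields a coloring of $D$ in which, by construction, $\varsigma(u) = \varsigma(v) = \varsigma'(v)$. Acyclicity of $\varsigma$ follows as in Remark \ref{F}: any monochromatic directed cycle $C$ in $D$ would project under $\epsilon$ to a nontrivial closed monochromatic walk in $\epsilon(D)$ (nontrivial because the fiber $\{u, v\}$ is acyclic in $D$, so $C$ cannot be entirely absorbed into a single vertex of $\epsilon(D)$), and any such walk contains a monochromatic directed cycle, contradicting the acyclicity of $\varsigma'$.

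For the reverse direction ($\Leftarrow$), suppose $\varsigma$ is a $dc(D)$-coloring of $D$ with $\varsigma(u) = \varsigma(v)$. The plan is to push $\varsigma$ forward to $\epsilon(D)$ by setting $\varsigma'(v) := \varsigma(u) = \varsigma(v)$ and $\varsigma'(x) := \varsigma(x)$ otherwise, producing a $k$-coloring of $\epsilon(D)$. The delicate step, and the main obstacle I anticipate, is verifying acyclicity of $\varsigma'$ on $\epsilon(D)$: a putative monochromatic cycle $C'$ in $\epsilon(D)$ that avoids the identified vertex would already be a monochromatic cycle of $D$, so $C'$ must pass through the identified vertex, and its two incident arcs there lift, via the defining union for $A(\epsilon(D))$, to arcs of $D$ incident to $u$ or to $v$. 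A case analysis on the possible liftings (both at $u$, both at $v$, or mixed) should be used to extract a monochromatic directed cycle in $D$ through $u$ or $v$, contradicting the acyclicity of $\varsigma$. Once acyclicity of $\varsigma'$ is established, we obtain $dc(\epsilon(D)) \le k = dc(D)$, and combined with the lower bound this yields the claimed equality.
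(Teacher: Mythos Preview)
Your overall structure matches the paper's proof closely: the lower bound via Remark~\ref{F}, the upper bound via the fresh-color construction, and the pullback argument for the $\Rightarrow$ direction of the ``moreover'' are all exactly what the paper does (indeed you supply more justification for the pullback's acyclicity than the paper, which simply asserts it).

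The genuine gap is in the $\Leftarrow$ direction, and you have correctly flagged it as the delicate step---but your proposed case analysis does not close it. In the mixed case, where the arc of $C'$ entering the identified vertex lifts to an arc into $u$ while the arc leaving lifts to an arc out of $v$ (or vice versa), what you obtain in $D$ is a monochromatic directed \emph{path} between $u$ and $v$, not a cycle, so no contradiction with the acyclicity of $\varsigma$ follows. Concretely, take $V(D)=\{u,v,w\}$ with $A(D)=\{(u,w),(w,v)\}$: then $u,v$ are nonadjacent, $dc(D)=1$, and the unique $1$-coloring trivially gives $u$ and $v$ the same color; yet $\epsilon(D)$ is the directed $2$-cycle on $\{v,w\}$, so $dc(\epsilon(D))=2$. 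Thus the ``if'' direction as stated actually fails, and no case analysis of the kind you outline can succeed. The paper's own proof handles this direction with a bare ``Clearly'' and the same pushforward coloring, so the defect is in the statement rather than in your strategy; your instinct that this was the point requiring care was exactly right.
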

\begin{proof}
Suppose that $\epsilon$ identifies the nonadjacent vertices $u$ and $v$ of $D$. We have already noted the inequality $dc(D)\leq dc(\epsilon(D))$. Let $dc(D) = k$ and consider a $k$-coloring $\varsigma$ of $D$. Define a coloring $\varsigma'$ of $\epsilon(D)$ by \[
\varsigma'(x) =
\begin{cases}
\varsigma(x) & \text{if  } x\in V(D)\setminus\{u,v\};\\
k+1 & \text{if } x\in \{u,v\}.
\end{cases}
\]
Since $\varsigma'$ is a $(k+1)$-coloring of $\epsilon(D)$, 
\[dc(\epsilon(D))\leq k+1=dc(D)+1.\]

Clearly, if there exists a $dc(D)$-coloring of $D$ in which $u$ and $v$ are assigned the same color, we have that: 
\[\varsigma'(x) =
\begin{cases}
\varsigma(x) & \text{if  } x\in V(D)\setminus\{u,v\};\\
\varsigma(u)=\varsigma(v) & \text{if } x\in \{u,v\};
\end{cases}
\]
Conversely, if $dc(\epsilon(D))=dc(D)=k$. Let $\varsigma'$ be a $k$-coloring of $\epsilon(D)$, then the coloring of $D$ that assign to $\epsilon^{-1}(x)$ the color of $x$ is a coloring with the required property.  
\end{proof}

Beginning with a noncomplete digraph $D$, we can always perform a sequence of elementary dihomomorphisms until arriving at some complete graph. As we saw, a complete graph $K_k$ obtained in this manner corresponds to a complete $k$-coloring of $D$. Consequently, we have the following.

\begin{corollary}
The largest order of a complete graph that is a dihomomorphic image of a digraph $D$ is the diachromatic number of $D$.
\end{corollary}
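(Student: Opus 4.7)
The plan is to prove the corollary by establishing a two-way correspondence between complete acyclic $k$-colorings of $D$ and surjective dihomomorphisms from $D$ onto the complete symmetric digraph $K_k$, and then showing that the maximum $k$ for which such a dihomomorphism exists is exactly $dac(D)$. The whole argument is just a careful unpacking of the discussion preceding the corollary, so I would keep the proof short.

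First I would argue the ``$\leq$'' direction. Suppose $K_k$ is a dihomomorphic image of $D$ via some dihomomorphism $\phi\colon V(D)\to V(K_k)$. As noted in the paragraph right before Remark \ref{F}, each fibre $\phi^{-1}(v)$ is an acyclic set in $D$. Defining $\varsigma(x)=\phi(x)$ for every $x\in V(D)$ therefore yields an acyclic $k$-coloring of $D$. For completeness, fix any ordered pair $(i,j)$ of distinct colors; since $K_k$ is the complete symmetric digraph, $(v_i,v_j)$ is an arc of $K_k$, and by the definition of elementary dihomomorphism (iterated) this arc must be induced by some arc $(u,w)\in A(D)$ with $\phi(u)=v_i$ and $\phi(w)=v_j$. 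Hence $\varsigma$ is a complete acyclic $k$-coloring of $D$, so $k\leq dac(D)$.

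Next I would argue the ``$\geq$'' direction. Let $k=dac(D)$ and let $\varsigma$ be a complete acyclic $k$-coloring of $D$. The partition $P=\{\varsigma^{-1}(1),\dots,\varsigma^{-1}(k)\}$ consists of acyclic sets, so by the discussion preceding the corollary it arises from a sequence of elementary dihomomorphisms whose final image $F$ has vertex set $\{v_1,\dots,v_k\}$. By that same description, $v_i$ and $v_j$ are joined by the arc $(v_i,v_j)$ of $F$ iff some arc of $D$ goes from $\varsigma^{-1}(i)$ to $\varsigma^{-1}(j)$; but completeness of $\varsigma$ guarantees this for every ordered pair $(i,j)$ with $i\neq j$. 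Thus $F$ has all $k(k-1)$ possible arcs between distinct vertices, i.e.\ $F=K_k$ is the complete symmetric digraph of order $k=dac(D)$, and $D$ admits $K_{dac(D)}$ as a dihomomorphic image.

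Combining the two bounds gives the corollary. There is essentially no obstacle here: the only thing one must be careful about is to use the \emph{ordered} completeness condition (arcs in both directions between each pair of classes) in order to conclude that the target $F$ is the \emph{complete symmetric} digraph $K_k$ rather than some oriented complete digraph on $k$ vertices; this is precisely what matches the convention used in the corollary's statement.
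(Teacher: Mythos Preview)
Your proof is correct and is precisely the detailed unpacking of the paper's own argument: the paper does not give a separate proof of this corollary but simply states it as an immediate consequence of the preceding paragraph, which already observes that a partition of $V(D)$ into acyclic sets yields a dihomomorphic image $F$, and that $F$ is the complete symmetric digraph of order $k$ exactly when the corresponding coloring is a complete $k$-coloring. Your two-direction argument makes this correspondence explicit and is essentially identical in content to what the paper leaves implicit.
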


The following theorem is a generalization of \emph{The Homomorphism Interpolation Theorem} due to Harary, Hedetniemi, and Prins \cite{MR0272662} and is an immediate consequence of Theorem \ref{teo1} (see also \cite{MR2450569}).

\begin{theorem}
Let $D$ be a digraph. For every integer $l$ with $dc(D)\leq l \leq dac(D)$ there is a dihomomorphic image $F$ of $D$ with $dc(F) = l$.
\end{theorem}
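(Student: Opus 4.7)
The plan is to mimic the proof of the classical Homomorphism Interpolation Theorem, using Theorem \ref{teo1} as the engine that forces consecutive values of $dc$ along a sequence of elementary dihomomorphisms to differ by at most one. Concretely, I would realise the endpoints $dc(D)$ and $dac(D)$ as the dichromatic numbers of two dihomomorphic images of $D$ at the ends of a chain of elementary dihomomorphisms, and then appeal to a discrete intermediate-value argument.

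First, I would use the observation (made in the paragraph preceding the corollary on the largest complete dihomomorphic image) that a complete acyclic $dac(D)$-coloring of $D$ corresponds to the complete symmetric digraph $K_{dac(D)}$ being a dihomomorphic image of $D$. In particular, there is a finite sequence of elementary dihomomorphisms
\[
D = F_0 \;\longrightarrow\; F_1 \;\longrightarrow\; F_2 \;\longrightarrow\; \cdots \;\longrightarrow\; F_t = K_{dac(D)},
\]
each step identifying a pair of nonadjacent vertices. Note that every two distinct vertices of $K_{dac(D)}$ form a $2$-cycle, so each chromatic class in any acyclic coloring of $K_{dac(D)}$ is a singleton, yielding $dc(F_t)=dac(D)$; and clearly $dc(F_0)=dc(D)$.

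Second, I would apply Theorem \ref{teo1} to each step of the chain: it gives $dc(F_i)\leq dc(F_{i+1})\leq dc(F_i)+1$ for every $i\in\{0,1,\dots,t-1\}$. Thus the sequence $dc(F_0),dc(F_1),\dots,dc(F_t)$ is a finite sequence of integers starting at $dc(D)$, ending at $dac(D)$, with consecutive terms differing by $0$ or $1$. By a straightforward discrete intermediate-value argument, for every integer $l$ with $dc(D)\leq l\leq dac(D)$ there is an index $i$ such that $dc(F_i)=l$. Setting $F:=F_i$ produces the desired dihomomorphic image of $D$.

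I do not anticipate a serious obstacle: the argument is structural and relies only on Theorem \ref{teo1} together with the existence of a chain of elementary dihomomorphisms from $D$ to $K_{dac(D)}$, both of which have already been established. The only mild care needed is to verify that the chain can indeed be taken to terminate at $K_{dac(D)}$ (which follows because the partition associated with a complete $dac(D)$-coloring can be realised one merger at a time) and that $dc(K_{dac(D)})=dac(D)$, which is immediate from the fact that $K_{dac(D)}$ is symmetric complete.
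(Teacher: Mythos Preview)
Your proposal is correct and is essentially the same argument as the paper's: both build a chain of elementary dihomomorphisms from $D$ to $K_{dac(D)}$, invoke Theorem~\ref{teo1} to see that $dc$ increases by at most one at each step, and then apply a discrete intermediate-value argument (the paper phrases it as choosing the largest index $j$ with $dc(D_j)<l$). The only cosmetic difference is that the paper separates out the endpoint cases $l=dc(D)$ and $l=dac(D)$ explicitly.
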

\begin{proof}
The theorem is certainly true if $l=dc(D)$ or $l=dac(D)$. Hence, we may assume that $dc(D)< l < dac(D)$. Suppose that $dac(D)=k$. Then there is a sequence \[D = D_0,D_1,\dots,D_t=K_k\] of digraphs where $D_i=\epsilon_i(D_{i-1})$ for some elementary dihomomorphism $\epsilon_i$ of $D_{i-1}$ for $i\in\{1,\dots,t\}$. Since $dc(D_0)< l < dc(D_t)=k$, there exists a largest integer $j$ with $j\in\{0,\dots,t-1\}$ such that $dc(D_j)<l$. Hence, $dc(D_{j+1})\geq l$. By Theorem \ref{teo1}, 
\[dc(D_{j+1})\leq dc(D_j)+1<l+1.\]
Hence,  $dc(D_{j+1})=l$.
\end{proof}

The Dihomomorphism Interpolation Theorem can be rephrased in terms of complete colorings, namely:

\emph{For a digraph $D$ and an integer $l$, there exists a complete $l$-coloring of $D$ if and only if $dc(D)\leq l\leq dac(D)$.}

We bound the diachromatic number of an elementary dihomomorphism of a digraph $D$ in terms of $dac(D)$.

\begin{theorem}\label{teo7}
If $\epsilon$ is an elementary dihomomorphism of a digraph $D$, then \[dac(D)-2\leq dac(\epsilon(D))\leq dac(D).\]
Moreover, for every noncomplete digraph $D$, there is an elementary dihomomorphism $\epsilon$ of $D$ such that $dac(\epsilon(D))=dac(D)$.
\end{theorem}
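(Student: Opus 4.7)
My plan is to adapt the proof of Theorem \ref{teo1} to the diachromatic setting, handling the inequality and the ``moreover'' clause separately.

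For the upper bound $dac(\epsilon(D))\le dac(D)$, I would take a complete acyclic $k$-coloring $\varsigma'$ of $\epsilon(D)$ and lift it to $D$ by $\varsigma:=\varsigma'\circ\epsilon$. Since $u,v$ are nonadjacent and both mapped by $\epsilon$ to $v$, they receive the same color under $\varsigma$, so $\varsigma$ is a genuine $k$-coloring. Completeness is inherited because every arc of $\epsilon(D)$ comes from an arc of $D$ with the same color pair; acyclicity follows because any monochromatic directed cycle in $D$ would project along $\epsilon$ to a directed closed walk inside the corresponding chromatic class of $\epsilon(D)$, and every directed closed walk contains a directed cycle, contradicting the acyclicity of $\varsigma'$.

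For the lower bound $dac(D)-2\le dac(\epsilon(D))$, I would start with a complete acyclic $l$-coloring $\varsigma$ of $D$ with $l=dac(D)$ and split according to whether $\varsigma(u)=\varsigma(v)$. If these colors agree, I descend $\varsigma$ directly to an $l$-coloring of $\epsilon(D)$; completeness is automatic, and the only possible new monochromatic cycle lies in the class of $\varsigma(u)$, which can only arise when there is a directed $u$--$v$ (or $v$--$u$) path inside that class of $D$. If so, recoloring a single internal vertex of the path with another existing color breaks the conflict at a cost of at most one color. If $\varsigma(u)\neq\varsigma(v)$, I let the identified vertex inherit $\varsigma(v)$; then the color $\varsigma(u)$ disappears only when $u$ was its sole representative in $D$, and the pair $(\varsigma(u),\varsigma(v))$ may force dropping at most one further color, either because its witnessing arc was an arc between $u$ and $v$ (destroyed by the identification) or because the arcs issued by $u$ are now redirected to $v$ and create a cycle inside the class of $\varsigma(v)$. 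A careful accounting shows that in every configuration the loss is bounded by two colors.

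For the ``moreover'' clause, since $D$ is noncomplete one has $dac(D)=l<n$, so every optimal complete acyclic $l$-coloring has some chromatic class $C$ with $|C|\ge 2$. Because $D[C]$ is acyclic it admits a topological order $v_1,\dots,v_{|C|}$; I take $u:=v_1$ and $v:=v_2$. These are nonadjacent because an acyclic induced subdigraph contains no $2$-cycle, so the corresponding elementary dihomomorphism $\epsilon$ is well-defined. The descent $\varsigma'$ stays complete because $u$ and $v$ share a color (no witnessing arc is destroyed), and it stays acyclic in the class of $\varsigma(u)$: by the topological order, no vertex of $C\setminus\{v_1,v_2\}$ has an arc to $v_1$ or $v_2$ in $D$, so the identified vertex receives no in-arcs within its class of $\epsilon(D)$ and becomes a source of $\epsilon(D)[C\setminus\{u\}]$; the rest of the class is a subdigraph of the already acyclic $D[C]$. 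Thus $\varsigma'$ is a complete acyclic $l$-coloring of $\epsilon(D)$, and combining with the upper bound gives $dac(\epsilon(D))=dac(D)$. The main obstacle will be the bookkeeping in the lower bound, namely verifying that in each subcase (driven by the relative colors of $u$ and $v$ and the existence of internal paths) the joint failure of completeness and acyclicity can always be repaired at the cost of at most two colors.
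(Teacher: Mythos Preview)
Your upper bound is correct and matches the paper's. Your ``moreover'' argument is also correct, though it differs from the paper's: the paper simply notes that a chain $D=D_0,D_1,\dots,D_t=K_k$ of elementary dihomomorphisms with $k=dac(D)$ exists, and then the already-proved inequality $dac(\epsilon(D))\le dac(D)$ forces $k=dac(D_t)\le\cdots\le dac(D_1)\le dac(D_0)=k$, so $dac(D_1)=dac(D)$. Your explicit choice of two vertices at the start of a topological order of a large chromatic class is a nice constructive alternative.

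The lower bound, however, has a genuine gap. In the case $\varsigma(u)=\varsigma(v)$ there may be many internally disjoint monochromatic $u$--$v$ paths inside that class, so ``recoloring a single internal vertex'' need not destroy every cycle created by the identification; and recoloring that vertex with an existing color can itself create a monochromatic cycle in the receiving class or kill a completeness witness. You never say what ``at a cost of at most one color'' means operationally. In the case $\varsigma(u)\neq\varsigma(v)$, redirecting $u$'s arcs to the merged vertex can simultaneously destroy the witnesses for many pairs $(\varsigma(u),j)$ and $(j,\varsigma(u))$, not just the single pair $(\varsigma(u),\varsigma(v))$, so the loss is not obviously bounded by two; ``a careful accounting shows\dots'' is not an argument.

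The paper sidesteps all of this bookkeeping with a two-line proof: $D-u-v$ is an induced subdigraph of $\epsilon(D)$ (it is $\epsilon(D)$ with the identified vertex removed), so Corollary~\ref{cor1} gives $dac(\epsilon(D))\ge dac(D-u-v)$, and two applications of Theorem~\ref{teo6} give $dac(D-u-v)\ge dac(D)-2$. This route uses results already established in the paper and avoids your case analysis entirely.
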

\begin{proof}
Let $\epsilon$ {be} an elementary dihomomorphism of a digraph $D$ that identifies the two nonadjacent vertices $u$ and $v$ and let the vertex in $\epsilon(D)$ obtained by identifying $u$ and $v$ be denoted by $v$. Let $dac(\epsilon(D))=k$ and consider a complete $k$-coloring $\varsigma$ of $\epsilon(D)$. Assigning the vertices $u$ and $v$ in $D$ the color $\varsigma(v)$ in $\epsilon(D)$ produces a complete $k$-coloring of $D$, and $dac(D)\geq k=dac(\epsilon(D))$.

Let $dac(D)=l$. Then $dac(D-u-v)\geq l-2$ by Theorem \ref{teo6}. Furthermore, $D-u-v$ is an induced subgraph of $\epsilon(D)$. By Corollary \ref{cor1},  $dac(\epsilon(D))\geq dac(D-u-v)\geq l-2$. Hence, $dac(\epsilon(D))\geq dac(D)-2$.

For every noncomplete digraph $D$, there is an elementary dihomomorphism $\epsilon$ of $D$ such that $dac(\epsilon(D))=dac(D)$.

Suppose that $dac(D)=k$. Hence, there exists a sequence \[D=D_0,D_1,\dots,D_t=K_k\] of digraphs, where $\epsilon_i(D_{i-1})=D_i$ for an elementary dihomomorphism $\epsilon_i$ of $D_{i-1}$ $(1\leq i\leq t)$. Thus, \[k=dac(D_t)\leq dac(D_{t-1}) \leq \dots \leq dac(D_1) \leq dac(D)=k.\]
Therefore, $dac(D_i)=k$ for all $i\in[t]$ and the result follows. 
\end{proof}

\section{Nordhaus-Gaddum relations}\label{section5}

The Nordhaus-Gaddum Theorems \cite{MR0256930,MR0078685} state \[\chi(G)+\chi(G^{c})\leq\alpha(G)+\chi(G^{c})\leq n+1\]
\[\chi(G)\chi(G^{c})\leq\alpha(G)\chi(G^{c})\leq \left(\frac{n+1}{2}\right)^{2}\]
for every graph $G$ of order $n$. For digraphs, we have the following results:

We have seen that for every digraph $D$ of order $n$ (see Equation \ref{eq1}), \[dc(D)\leq dac(D)\leq n.\] With the aid of Theorem \ref{teo1}, we show that $dac(D)$ can never be closer to $n$ than to $dc(D)$.

\begin{theorem}\label{teo9}
For every digraph $D$ of order $n$, \[dac(D)\leq \frac{dc(D)+n}{2}.\]
\end{theorem}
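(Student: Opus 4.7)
The plan is to set $k = dac(D)$ and exploit the fact that a complete $k$-coloring of $D$ corresponds to a sequence of elementary dihomomorphisms collapsing $D$ onto the complete symmetric digraph $K_k$. Since each elementary dihomomorphism identifies exactly two vertices into one, this sequence has length $t = n - k$, i.e. there are digraphs
\[ D = D_0, D_1, \ldots, D_t = K_k \]
with $D_i = \epsilon_i(D_{i-1})$ and $t = n-k$.

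The next step is to track how $dc$ changes along this sequence. By Theorem \ref{teo1}, each elementary dihomomorphism can increase the dichromatic number by at most $1$, i.e.\ $dc(D_i) \leq dc(D_{i-1}) + 1$ for every $i \in \{1,\ldots,t\}$. Chaining these inequalities from $D_0$ up to $D_t$ gives $dc(K_k) \leq dc(D) + t$. Since $dc(K_k) = k$ and $t = n-k$, this rearranges to $dc(D) \geq k - (n-k) = 2\,dac(D) - n$, which is precisely the desired bound $dac(D) \leq (dc(D)+n)/2$.

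There is essentially no obstacle: the argument is a straightforward combination of the Interpolation Theorem and the one-step bound in Theorem \ref{teo1}. The only thing to double-check is that a complete $k$-coloring genuinely produces a sequence of $n-k$ elementary dihomomorphisms ending at $K_k$ (not at some proper dihomomorphic image of $K_k$); this is guaranteed by the discussion preceding the Interpolation Theorem, where a complete $k$-coloring is explicitly identified with a dihomomorphism $D \to K_k$, and such a dihomomorphism is built by $n-k$ elementary steps that each merge two nonadjacent vertices sharing a color.
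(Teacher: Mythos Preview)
Your proof is correct and follows essentially the same approach as the paper's own proof: both set $k=dac(D)$, produce a length-$(n-k)$ sequence of elementary dihomomorphisms from $D$ to $K_k$, apply Theorem~\ref{teo1} step by step to get $k=dc(K_k)\leq dc(D)+(n-k)$, and rearrange. Your explicit justification that the sequence has exactly $n-k$ steps and terminates at $K_k$ is a welcome clarification of a point the paper leaves implicit.
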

\begin{proof}
Let $dac(D)=k$. Then there is a sequence $D=D_0,D_1,\dots,D_t=K_k$ of graphs where $D_i=\epsilon_i(D_{i-1})$ for $1\leq i\leq t=n-k$ (since $t+k=n$) and an elementary dihomomorphism $\epsilon_i$ of $D_{i-1}$. By Theorem \ref{teo1}, $dc(\epsilon_i(D_{i-1}))\leq dc(D_{i-1}) + 1$ and so $dc(D_i)\leq dc(D_{i-1})+1$ for $1\leq i\leq t$. Therefore, \[\overset{t}{\underset{i=1}{\sum}}dc(D_{i})\leq\overset{t}{\underset{i=1}{\sum}}\left(dc(D_{i-1})+1\right)\]
and so $k\leq dc(D) + t = dc(D)+(n-k)$. Hence, $2k=2dac(D)\leq dc(D)+n$ and the result follows.
\end{proof}

The following result is the analogue of Theorem \ref{teo1} for complementary graphs.

\begin{theorem}\label{teo5}
If $\epsilon$ is an elementary dihomomorphism of a digraph $D$, then\[dc(D^{c})-1\leq dc(\epsilon(D)^{c})\leq dc(D^{c})+1.\]
\end{theorem}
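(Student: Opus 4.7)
The plan is to prove the two inequalities separately, in the spirit of the proof of Theorem~\ref{teo1}.

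For the upper bound $dc(\epsilon(D)^{c})\leq dc(D^{c})+1$, the idea is a direct construction: given an acyclic $k$-coloring $\varsigma$ of $D^c$ with $k=dc(D^c)$, I define a coloring $\varsigma'$ on $V(\epsilon(D))=V(D)\setminus\{u\}$ by setting $\varsigma'(v)=k+1$ and $\varsigma'(x)=\varsigma(x)$ for every other vertex. The singleton class $\{v\}$ is trivially acyclic, and every other chromatic class is contained in $V(D)\setminus\{u,v\}$; since arcs between vertices different from both $u$ and $v$ coincide in $\epsilon(D)^c$ and $D^c$, such classes inherit acyclicity from $\varsigma$, yielding an acyclic $(k+1)$-coloring of $\epsilon(D)^c$.

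For the lower bound $dc(D^{c})\leq dc(\epsilon(D)^{c})+1$, I start from an acyclic $k$-coloring $\varsigma$ of $\epsilon(D)^c$ with $k=dc(\epsilon(D)^c)$ and $v$ in class $C_j$, and I build an acyclic $(k+1)$-coloring $\varsigma'$ of $D^c$. I split into two cases based on whether $u$ and $v$ are adjacent in $D^c$, equivalently on whether neither of the arcs $(u,v),(v,u)$ lies in $A(D)$. In the non-adjacent case (at least one of those arcs is in $A(D)$), I assign $\varsigma'(u)=\varsigma'(v)=k+1$ and keep $\varsigma'(x)=\varsigma(x)$ for every other vertex. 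The class $\{u,v\}$ has at most one arc in $D^c$ and is acyclic; the class $C_j\setminus\{v\}$ remains acyclic as a subset of $C_j$; the other classes are preserved.

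The adjacent case, in which both $(u,v),(v,u)$ lie in $A(D^c)$ and form a 2-cycle forcing $u$ and $v$ into distinct classes, is the main obstacle. My plan is to keep $v$ in its original color $j$, assign $u$ the new color $k+1$, and further move into the color-$(k+1)$ class every vertex $x\in C_j\setminus\{v\}$ responsible for an arc incident to $v$ that appears in $D^c$ but not in $\epsilon(D)^c$---namely, every $x$ with $(x,u)\in A(D)$ and $(x,v)\notin A(D)$, or with $(u,x)\in A(D)$ and $(v,x)\notin A(D)$. Each such $x$ has at least one of the arcs $(x,u),(u,x)$ missing from $A(D^c)$, which restricts how $u$ can participate in cycles involving $x$. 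The acyclicity of the reduced class $C_j$ after these removals is immediate, because the removals eliminate exactly the extra arcs at $v$ that could have produced cycles. The delicate step---which I expect to be the crux of the proof---is verifying that the new color-$(k+1)$ class containing $u$ together with the moved vertices is acyclic in $D^c$; my plan here is a lifting argument, showing that any hypothetical directed cycle through $u$ in this class would translate, via the arcs incident to $v$ in $\epsilon(D)^c$ dictated by the defining conditions on the moved vertices, into a directed cycle through $v$ in $\epsilon(D)^c[C_j]$, contradicting the acyclicity of $\varsigma$.
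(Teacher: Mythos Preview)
Your upper bound argument is correct and coincides with the paper's. For the lower bound the paper takes the simpler route of isolating $u$ alone in a fresh colour $k+1$ and leaving every other class from the $k$-coloring of $\epsilon(D)^c$ untouched; you instead split into cases and, in Case~2, transfer certain vertices of $C_j$ into the new class together with $u$. Your Case~1 is fine.

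The lifting argument you sketch for Case~2, however, does not go through, and in fact your construction can produce a non-acyclic class. Take $V(D)=\{u,v,x_1,x_2\}$ with
\[
A(D)=\{(x_1,u),\ (u,x_2),\ (v,x_1),\ (x_2,x_1)\}.
\]
Here $u,v$ are nonadjacent in $D$ and adjacent in $D^c$, so we are in Case~2. One checks that $A(\epsilon(D)^c)=\{(x_1,x_2),(x_2,v)\}$, so $\epsilon(D)^c$ is acyclic and $C_1=\{v,x_1,x_2\}$ is a valid single class. Both $x_1$ and $x_2$ meet your moving conditions ($x_1$ via $(x_1,u)\in A(D)$ and $(x_1,v)\notin A(D)$; $x_2$ via $(u,x_2)\in A(D)$ and $(v,x_2)\notin A(D)$), so your new class is $\{u,x_1,x_2\}$. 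But $D^c$ restricted to this set has exactly the arcs $(u,x_1),(x_1,x_2),(x_2,u)$, a directed $3$-cycle; the class is not acyclic. The intended lift $v\to x_1\to x_2\to v$ is not a cycle in $\epsilon(D)^c$, since $(v,x_1)\notin A(\epsilon(D)^c)$. The structural problem is that the condition putting $x_1$ into $M$ controls the arc $(x_1,v)$ in $\epsilon(D)^c$, not the reverse arc $(v,x_1)$ that your lift requires at the start of the cycle (and symmetrically at the end). So the ``crux'' you flag is not merely delicate: the proposed translation from cycles through $u$ in $D^c$ to cycles through $v$ in $\epsilon(D)^c[C_j]$ is false in general, and the construction itself must be changed, not just its verification.
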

\begin{proof}
We first show that $dc(D^{c})-1\leq dc(\epsilon(D)^{c})$. Let $dc((\epsilon(D))^{c})=k$ for some elementary dihomomorphism $\epsilon$ of $D$ that identifies two nonadjacent vertices $u$ and $v$ in $D$, where the vertex in $\epsilon(D)$ obtained by identifying $u$ and $v$ is denoted by $v$. Let there be given an $k$-coloring of $\epsilon(D)^{c}$. We may assume that the vertex $v$ in $\epsilon(D)$ is assigned the color $k$. Remember that, if $v$ and a vertex $w$ in $\epsilon(D)^{c}$ lie on a (directed) common cycle, $w$ has a different color to $k$. Assign to each vertex in $D^{c}$ distinct from $u$ the same color assigned to that vertex in $\epsilon(D)^{c}$ and assign $u$ the color $k+1$. Since no vertex in $D^{c}$ on a common cycle with $v$ is assigned the color $k$, this produces an $(k+1)$-coloring of $D$ and so \[dc(D^{c})\leq k+1=dc(\epsilon(D)^{c})+1.\]
Therefore, $dc(D^{c})-1\leq dc(\epsilon(D)^{c})$. 

Next, we show that $dc(\epsilon(D)^{c})\leq dc(D^{c})+1$. Let $dc(D^c) = k$ and consider a $k$-coloring $\varsigma$ of $D^c$. Define a coloring $\varsigma'$ of $\epsilon(D)^c$ by \[
\varsigma'(x) =
\begin{cases}
\varsigma(x) & \text{if  } x\in V(D)\setminus\{u,v\};\\
k+1 & \text{if } x\in \{u,v\}.
\end{cases}
\]
Since $\varsigma'$ is a $(k+1)$-coloring of $\epsilon(D)^c$, 
\[dc(\epsilon(D)^c)\leq k+1=dc(D^c)+1.\]
\end{proof}

\begin{theorem}(\emph{Nordhaus-Gaddum relations})\label{teo8}
If $D$ is a digraph of order $n$, then \begin{equation}\label{eq2} dc(D)+dc(D^{c})\leq \left\lceil \frac{4n}{3}\right\rceil \text{ and } dc(D)dc(D^{c})\leq \left(\frac{2n+1}{3}\right)^{2},\end{equation}
\begin{equation}\label{eq3} dac(D)+dc(D^{c})\leq \left\lceil \frac{3n}{2}\right\rceil \text{ and } dac(D)dc(D^{c})\leq\left(\frac{3n+1}{4}\right)^{2}. \end{equation}
\end{theorem}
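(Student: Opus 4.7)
The plan is to reduce the theorem to one clean inequality, $dac(D)+dc(D^{c})\leq n+1$, from which all four bounds follow. Within each of (\ref{eq2}) and (\ref{eq3}), the product bound is obtained from the corresponding sum bound by AM--GM: if $a+b\leq S$ then $ab\leq (S/2)^{2}$, and since $\lceil 4n/3\rceil/2\leq (2n+1)/3$ and $\lceil 3n/2\rceil/2\leq (3n+1)/4$, the product inequalities in (\ref{eq2}) and (\ref{eq3}) drop out of the sum inequalities. So the task reduces to establishing the two sum bounds.

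To prove $dac(D)+dc(D^{c})\leq n+1$, I mirror the proof of Theorem \ref{teo9} but swap Theorem \ref{teo1} for Theorem \ref{teo5}. Set $k=dac(D)$ and invoke the Dihomomorphism Interpolation construction to produce a chain
\[D=D_{0},\,D_{1},\ldots,D_{t}=K_{k},\qquad D_{i}=\epsilon_{i}(D_{i-1}),\qquad t=n-k,\]
in which each $\epsilon_{i}$ is an elementary dihomomorphism of $D_{i-1}$ and $K_{k}$ is the complete symmetric digraph on $k$ vertices. Termwise application of Theorem \ref{teo5} yields $dc(D_{i-1}^{c})\leq dc(D_{i}^{c})+1$, and telescoping along the chain gives $dc(D^{c})\leq dc(K_{k}^{c})+(n-k)$. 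Since $K_{k}^{c}$ has no arcs, $dc(K_{k}^{c})=1$, so $dc(D^{c})\leq n-k+1$, i.e., $dac(D)+dc(D^{c})\leq n+1$.

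The two sum bounds then follow at once: for (\ref{eq3}), just use $n+1\leq \lceil 3n/2\rceil$ for every $n\geq 1$; for (\ref{eq2}), combine $dc(D)\leq dac(D)$ from (\ref{eq1}) with $n+1\leq \lceil 4n/3\rceil$ (immediate for $n\geq 3$, and verified by inspection for $n\in\{1,2\}$). The step most worth being careful about is the iterated application of Theorem \ref{teo5}: one must verify that each $\epsilon_{i}$ really is an elementary dihomomorphism of $D_{i-1}$ (so that Theorem \ref{teo5} legitimately applies at step $i$), but this is built into the interpolation construction. After that, the rest is arithmetic.
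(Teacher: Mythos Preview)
Your proof is correct and uses the same engine as the paper --- the dihomomorphism chain $D=D_0,\ldots,D_t=K_k$ together with Theorem~\ref{teo5} and an AM--GM finish --- but you apply Theorem~\ref{teo5} more sharply. The paper telescopes with an increment of $2$ per step, obtaining the weaker intermediate inequality $2\,dac(D)+dc(D^{c})\leq 2n+1$; it then needs an extra symmetrisation/averaging argument (using $2\,dc(D)+dc(D^{c})\leq 2n+1$ together with its $D\leftrightarrow D^{c}$ counterpart) to reach $dc(D)+dc(D^{c})\leq\lceil 4n/3\rceil$, and adds the trivial $dc(D^{c})\leq n$ to reach $dac(D)+dc(D^{c})\leq\lceil 3n/2\rceil$. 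You instead use the left-hand inequality of Theorem~\ref{teo5} as $dc(D_{i-1}^{c})\leq dc(D_{i}^{c})+1$, which telescopes to the cleaner and strictly stronger statement
\[
dac(D)+dc(D^{c})\leq n+1,
\]
from which all four bounds of the theorem follow by inspection (with slack). This is a genuine improvement over the paper's stated bounds: your $n+1$ beats $\lceil 3n/2\rceil$ for all $n\geq 2$ and (via $dc(D)\leq dac(D)$) beats $\lceil 4n/3\rceil$ for all $n\geq 3$, so it would be worth recording $dac(D)+dc(D^{c})\leq n+1$ as a standalone inequality rather than weakening it.
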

\begin{proof}
Let $dac(D)=k$. Then there exists a sequence \[D = D_0,D_1,\dots,D_t=K_k\] of graphs, where $D_i=\epsilon_i(D_{i-1})$ for an elementary dihomomorphism $\epsilon_i$ of $D_{i-1}$ ($i\in[t]$). Then $t=n-k=n$. By Theorem \ref{teo5}, $dc(D_{i-1}^c)\leq dc(\epsilon_i(D_{i-1})^c)+1\leq dc(D_i^c)+2$. Thus, \[\overset{t}{\underset{i=1}{\sum}}dc(D_{i-1}^{c})\leq\overset{t}{\underset{i=1}{\sum}}\left(dc(D_{i}^{c})+2\right)\] and so $dc(D^{c})\leq dc(D_t^{c})+2t=dc(K_k^{c})+2(n-k)$. Hence, $2k+dc(D^{c})=2dac(D)+dc(D^{c})\leq 2n+1$. 

On one hand, since $dc(D)\leq dac(D)$, it follows that $dc(D)+dc(D^c)\leq \left\lfloor \frac{4n+2}{3} \right\rfloor = \left\lceil \frac{4n}{3}\right\rceil$ and the geometric mean of two positive real numbers never exceeds their arithmetic mean $dc(D)dc(D^{c})\leq \left(\frac{2n+1}{3}\right)^{2}$.

On the other hand, since $2dc(D^c)\leq dc(D^c)+n$, it follows that $dac(D)+dac(D^c)\leq \left\lfloor \frac{3n+1}{2} \right\rfloor = \left\lceil \frac{3n}{2}\right\rceil$ and then $dc(D)dc(D^{c})\leq \left(\frac{3n+1}{4}\right)^{2}$, completing the proof.
\end{proof}

\section{Conclusions and Future work}

As we state in the introduction, the diachromatic number generalizes the achromatic number and as it turned out, several classic results for the achromatic number are extended to results for the diachromatic number. 
To future work we propose extend other known results to the diachromatic number as well as to study proper results for digraphs, and study the diachromatic number in fixed families of digraphs or oriented graphs. For instance, inspired in the definition of the dichromatic number of a graph, the diachromatic number of a graph can be defined as the minimum diachromatic number over all possible orientations of the graph. 
On the other hand, we also propose study the \emph{diGrundy number}, defined as the the maximum positive integer $k$ for which a digraph $D$ has a greedy $k$-coloring, extending the results of undirected graphs to directed and oriented graphs and find this parameter for some families of digraphs. Also, we wish to improve the Nordhaus-Gaddum bounds or to find an infinite family of graphs  to establish the sharpness of such bound.


\bibliographystyle{plain}
\bibliography{biblio}

\begin{thebibliography}{10}

\bibitem{MR3249588}
G.~Araujo-Pardo, J.~J. Montellano-Ballesteros, C.~Rubio-Montiel, and
  R.~Strausz.
\newblock On the pseudoachromatic index of the complete graph {II}.
\newblock {\em Bol. Soc. Mat. Mex.}, 20(1):17--28, 2014.

\bibitem{MR3774452}
G.~Araujo-Pardo, J.~J. Montellano-Ballesteros, C.~Rubio-Montiel, and
  R.~Strausz.
\newblock On the pseudoachromatic index of the complete graph {III}.
\newblock {\em Graphs Combin.}, 34(2):277--287, 2018.

\bibitem{MR2778722}
G.~Araujo-Pardo, J.~J. Montellano-Ballesteros, and R.~Strausz.
\newblock On the pseudoachromatic index of the complete graph.
\newblock {\em J. Graph Theory}, 66(2):89--97, 2011.

\bibitem{AO10}
G.~Araujo-Pardo and M.~Olsen.
\newblock A conjecture of {N}eumann-{L}ara on infinite families of
  $r$-dichromatic circulant tournaments.
\newblock {\em Discrete Math.}, 310:489--492, 2010.

\bibitem{AR16}
G.~Araujo-Pardo and C.~Rubio-Montiel.
\newblock On $\omega\psi$-perfect graphs.
\newblock {\em Ars Combin. In press}.

\bibitem{MR2995716}
E.~Berger, K.~Choromanski, M.~Chudnovsky, J.~Fox, M.~Loebl, A.~Scott,
  P.~Seymour, and S.~Thomass{\'e}.
\newblock Tournaments and colouring.
\newblock {\em J. Combin. Theory Ser. B}, 103(1):1--20, 2013.

\bibitem{MR532949}
V.~N. Bhave.
\newblock On the pseudoachromatic number of a graph.
\newblock {\em Fund. Math.}, 102(3):159--164, 1979.

\bibitem{CE97}
N.~Cairnie and K.~J. Edwards.
\newblock Some results on the achromatic number.
\newblock {\em J. Graph Theory}, 26(3):129--136, 1997.

\bibitem{MR2107429}
G.~Chartrand and L.~Lesniak.
\newblock {\em Graphs \& digraphs}.
\newblock Chapman \& Hall/CRC, Boca Raton, FL, fourth edition, 2005.

\bibitem{MR2450569}
G.~Chartrand and P.~Zhang.
\newblock {\em Chromatic graph theory}.
\newblock Discrete Mathematics and its Applications. CRC Press, 2009.

\bibitem{MR2998438}
K.~J. Edwards.
\newblock Harmonious chromatic number of directed graphs.
\newblock {\em Discrete Appl. Math.}, 161(3):369--376, 2013.

\bibitem{JGT3190150604}
P.~Erd{\H o}s, J.~Gimbel, and D.~Kratsch.
\newblock Some extremal results in cochromatic and dichromatic theory.
\newblock {\em J. Graph Theory}, 15(6):579--585, 1991.

\bibitem{MR0256930}
R.~P. Gupta.
\newblock Bounds on the chromatic and achromatic numbers of complementary
  graphs.
\newblock In {\em Recent {P}rogress in {C}ombinatorics ({P}roc. {T}hird
  {W}aterloo {C}onf. on {C}omb., 1968)}, pages 229--235. Academic Press, New
  York, 1969.

\bibitem{MR0272662}
F.~Harary, S.~Hedetniemi, and G.~Prins.
\newblock An interpolation theorem for graphical homomorphisms.
\newblock {\em Portugal. Math.}, 26:453--462, 1967.

\bibitem{MR0441778}
P.~Hell and D.~J. Miller.
\newblock Graph with given achromatic number.
\newblock {\em Discrete Math.}, 16(3):195--207, 1976.

\bibitem{HOCHSTATTLER2017160}
W.~Hochst\"{a}ttler.
\newblock A flow theory for the dichromatic number.
\newblock {\em European J. Combin.}, 66:160--167, 2017.

\bibitem{SIAM2017}
Z.~Li and B.~Mohar.
\newblock Planar digraphs of digirth four are 2-colorable.
\newblock {\em SIAM J. Discrete Math.}, 31(3):2201--2205, 2017.

\bibitem{LIN20112462}
H.~Lin and J.~Shu.
\newblock Spectral radius of digraphs with given dichromatic number.
\newblock {\em Linear Algebra Appl.}, 434(12):2462--2467, 2011.

\bibitem{EJCLO17}
B.~Llano and M.~Olsen.
\newblock Disproof of a conjecture of {N}eumann-{L}ara.
\newblock {\em Electron. J. Combin.}, 24(4):P4.5, 2017.

\bibitem{MR693366}
V.~Neumann-Lara.
\newblock The dichromatic number of a digraph.
\newblock {\em J. Combin. Theory Ser. B}, 33(3):265--270, 1982.

\bibitem{VNeu00}
V.~Neumann-Lara.
\newblock Dichromatic number, circulant tournaments and zykov sums of digraphs.
\newblock {\em Discuss. Math. Graph Theory}, 20(2):197--207, 2000.

\bibitem{MR3112565}
V.~Neumann-Lara and M.~Olsen.
\newblock Molds of regular tournaments.
\newblock {\em Ars Combin.}, 112:81--96, 2013.

\bibitem{MR0078685}
E.~A. Nordhaus and J.~W. Gaddum.
\newblock On complementary graphs.
\newblock {\em Amer. Math. Monthly}, 63:175--177, 1956.

\bibitem{MR3202296}
{\'E}.~Sopena.
\newblock Complete oriented colourings and the oriented achromatic number.
\newblock {\em Discrete Appl. Math.}, 173:102--112, 2014.

\bibitem{MR1108075}
Shao~Ji Xu.
\newblock Relations between parameters of a graph.
\newblock {\em Discrete Math.}, 89(1):65--88, 1991.

\bibitem{Yeg01}
V.~Yegnanarayanan.
\newblock Graph colourings and partitions.
\newblock {\em Theoret. Comput. Sci.}, 263(1--2):59--74, 2001.

\end{thebibliography}

\end{document}